\theoremstyle{plain}
\newtheorem{theorem}{Theorem}
\newtheorem{lemma}[theorem]{Lemma}
\newtheorem{proposition}[theorem]{Proposition}
\newtheorem{corollary}[theorem]{Corollary}
\newtheorem*{example*}{Example}
\newtheorem*{remark*}{Remark}
\newtheorem{acknowledgement*}{Acknowledgement}
\newcommand{\argmin}{\operatorname*{\arg\min}}
\newcommand{\sgn}{\operatorname{sgn}}
\newcommand{\A}{\mathcal{A}}
\newcommand{\C}{\mathcal{C}}
\newcommand{\D}{\mathcal{D}}
\newcommand{\M}{\mathcal{M}}
\newcommand{\Mhat}{\widehat{\mathcal{M}}}
\renewcommand{\O}{\mathcal{O}}
\renewcommand{\S}{\mathcal{S}}
\newcommand{\N}{\mathbb{N}}
\newcommand{\R}{\mathbb{R}}
\newcommand{\Rquer}{\overline{\R}}
\renewcommand{\P}{\mathbb{P}}
\newcommand{\ind}{\mathds{1}}
\newcommand{\eps}{\varepsilon}
\newcommand{\betaAL}{\hat\beta_{\textnormal{\tiny AL}}}
\newcommand{\betaALj}{\hat\beta_{\textnormal{\tiny AL},j}}
\newcommand{\betaLS}{\hat\beta_{\textnormal{\tiny LS}}}
\newcommand{\betaLSj}{\hat\beta_{\textnormal{\tiny LS},j}}
\newcommand{\betaLSs}{\hat\beta_{\textnormal{\tiny LS},s}}
\newcommand{\Anj}{A_{n,\beta_n,j}}
\newcommand{\Aj}{A_{\phi,j}}
\newcommand{\pto}{\overset{p}{\longrightarrow}}
\newcommand{\dto}{\overset{d}{\longrightarrow}}
\renewcommand{\emptyset}{\varnothing}
\setlist[enumerate]{ref=(\alph{enumi}),label=(\alph{enumi}),ref=(\alph{enumi})}
\begin{document}
%%%%%%%%%%%%%%%%%%%%%%%%%%%%%%%%%%%%%%%%%%%%%%%%%%%%%%%%%%%%%%%%%%%%%%%%%%%%
%%%%%%%%%%%%%%%%%%%%%%%%%%%%%%%%%%%%%%%%%%%%%%%%%%%%%%%%%%%%%%%%%%%%%%%%%%%%
%%%%%%%%%%%%%%%%%%%%%%%%%%%%%%%%%%%%%%%%%%%%%%%%%%%%%%%%%%%%%%%%%%%%%%%%%%%%

\title{Uniform Asymptotics and Confidence Regions Based on the
Adaptive Lasso with Partially Consistent Tuning}

\author{Nicolai Amann\thanks{Department of Statistics and Operations
Research, University of Vienna, Oskar-Morgenstern Platz 1, A-1090
Vienna, nicolai.amann@univie.ac.at.}\ \
and Ulrike Schneider\thanks{Institute of Statistics and Mathematical Methods in
Economics, TU Wien, Wiedner Hauptstra{\ss}e 8/E105-2, A-1040
Vienna, ulrike.schneider@tuwien.ac.at.} \\[2ex]
University of Vienna and TU Wien}

\date{}

\maketitle

\begin{abstract}
We consider the adaptive Lasso estimator with componentwise tuning in
the framework of a low-dimensional linear regression model. In our
setting, at least one of the components is penalized at the rate of
consistent model selection and certain components may not be penalized
at all. We perform a detailed study of the consistency properties and
the asymptotic distribution which includes the effects of
componentwise tuning within a so-called moving-parameter framework.
These results enable us to explicitly provide a set $\M$ such that
every open superset acts as a confidence set with uniform asymptotic
coverage equal to 1, whereas removing an arbitrarily small open set
along the boundary yields a confidence set with uniform asymptotic
coverage equal to 0. The shape of the set $\M$ depends on the
regressor matrix as well as the deviations within the componentwise
tuning parameters. Our findings can be viewed as a broad
generalization of \cite{PoetscherSchneider09,PoetscherSchneider10} who
considered distributional properties and confidence intervals based
on components of the adaptive Lasso estimator for the case of
orthogonal regressors.
\end{abstract}

%\bigskip
%
%{\bf Keywords}: model selection, adaptive lasso, convergence rate,
%distribution, inference, confidence set.
%
%\smallskip
%
%{\bf JEL classification}: C13, C51, C52.
%
%\medskip

%%%%%%%%%%%%%%%%%%%%%%%%%%%%%%%%%%%%%%%%%%%%%%%%%%%%%%%%%%%%%%%%%%%%%%%%%%%%
%%%%%%%%%%%%%%%%%%%%%%%%%%%%%%%%%%%%%%%%%%%%%%%%%%%%%%%%%%%%%%%%%%%%%%%%%%%%
%%%%%%%%%%%%%%%%%%%%%%%%%%%%%%%%%%%%%%%%%%%%%%%%%%%%%%%%%%%%%%%%%%%%%%%%%%%%
\section{Introduction} 
\label{sec:intro}
%%%%%%%%%%%%%%%%%%%%%%%%%%%%%%%%%%%%%%%%%%%%%%%%%%%%%%%%%%%%%%%%%%%%%%%%%%%%
%%%%%%%%%%%%%%%%%%%%%%%%%%%%%%%%%%%%%%%%%%%%%%%%%%%%%%%%%%%%%%%%%%%%%%%%%%%%
%%%%%%%%%%%%%%%%%%%%%%%%%%%%%%%%%%%%%%%%%%%%%%%%%%%%%%%%%%%%%%%%%%%%%%%%%%%%

The least absolute shrinkage and selection operator or Lasso by
\cite{Tibshirani96} has received tremendous attention in the
statistics literature in the past two decades. The main attraction of
this method lies in its ability to perform model selection and
parameter estimation at very low computational cost, and the fact that
the estimator can be used in high-dimensional settings where the
number of variables $p$ exceeds the number of observations $n$ (``$p
\gg n$'').

For these reasons, the Lasso has also turned into a very popular and
powerful tool in econometrics, and similar things can be said about
the estimator's many variants, among them the adaptive Lasso estimator
of \cite{Zou06}, where the $l_1$-penalty term is randomly weighted
according to some preliminary estimator. This particular method has
been used in econometrics in the context of diffusion processes
\citep{DeGregorioIacus12}, for instrumental variables
\citep{CanerFan15}, in the framework of stationary and non-stationary
autoregressions \citep{KockCallot15,Kock16} and for autoregressive
distributed lag (ARDL) models \citep{MedeirosMendes17}, to name just a
few.

Despite the popularity of this method, there are still many open
questions on how to construct valid confidence regions in connection
with the adaptive Lasso estimator. \cite{PoetscherSchneider10}
demonstrate that the oracle property from \cite{Zou06} and
\cite{HuangEtAl08b} cannot be used to conduct valid inference and that
resampling techniques also fail. They give confidence intervals with
exact coverage in finite samples as well as an extensive asymptotic
study in the framework of orthogonal regressors. However, settings
more general than the orthogonal case have not been considered yet.

\smallskip

In this paper, we consider an arbitrary low-dimensional linear
regression model ($``p \leq n$'') where the regressor matrix exhibits
full column rank. We allow for the adaptive Lasso estimator to be
tuned componentwise with some tuning parameters possibly being equal
to zero, so that not all coordinates have to be penalized. Due to this
componentwise structure, three possible asymptotic regimes arise: the
one where each zero component is identified as such with asymptotic
probability less than one, usually termed \emph{conservative model
selection}, the one where each zero component is revealed as zero with
asymptotic probability equal to one, usually referred to as
\emph{consistent model selection}, as well as the mixed case where
some components are tuned conservatively and some are tuned
consistently. The framework we consider encompasses the latter two
regimes.

\smallskip

The main challenge for inference in connection with the adaptive Lasso
and related estimators lies in the fact that the finite-sample
distribution depends on the unknown parameter in a complicated manner,
and that this dependence persists in large samples. Consequently, the
coverage probability of a confidence region varies over the parameter
space, and in order to conduct valid inference, one needs to guard
against the lowest possible coverage and consider the minimal one.
This is done so in the present paper.

\smallskip

Since explicit expressions for the finite-sample distribution and
therefore also the coverage probabilities of confidence regions are
unknown when the regressors are not orthogonal, our study is set in an
asymptotic framework. We determine the appropriate uniform rate of
convergence and derive the asymptotic distribution of an appropriately
scaled estimator that has been centered at the true parameter. While
the limit distribution is still only implicitly defined through a
minimization problem, the key observation and finding is that one may
explicitly characterize the set of minimizers once the union over all
true parameters is taken. This is done by heavily exploiting the
structure of the corresponding optimization problem and leads to a
compact set $\M$ that is determined by the asymptotic Gram matrix as
well as the asymptotic deviations between the componentwise tuning
parameters and the maximal one. Subsequently, this result can be used
to show how the set $\M$ acts as a benchmark for confidence regions
since, very loosely put, any larger set will necessarily have
asymptotic coverage equal to one, and any smaller set will exhibit
zero uniform coverage in the limit. We will formalize this statement,
sharpen it for a wide class of tuning regimes and demonstrate the
intrinsic problem that arises for inference in connection with this
estimation method.

\smallskip

In this article, we show that the one-dimensional case from
\cite{PoetscherSchneider10} can indeed be generalized to arbitrary
low-dimensional models. Our investigations reveal the geometry of
confidence regions based on the adaptive Lasso which cannot be seen in
the intervals of the one-dimensional setting. Our study also
encompasses the effects of varying tuning schemes over different
components of the parameter which may result in a loss of dimension in
the confidence set.

\smallskip

The paper is organized as follows. We introduce the model and the
assumptions as well as the estimator in Section~\ref{sec:setting}. In
Section~\ref{sec:AL_LS}, we study the relationship of the adaptive
Lasso to the least-squares estimator. The consistency properties with
respect to parameter estimation, rates of convergence, and model
selection are derived in Section~\ref{sec:consistency}.
Section~\ref{sec:asymp_dist} looks at the asymptotic distribution of
the estimator and deduces that it is always contained in a compact
set, independently of the unknown parameter. These results are used to
construct and discuss the confidence regions in
Section~\ref{sec:conf_sets}, where their shape is also illustrated. We
summarize in Section~\ref{sec:summary} and relegate all proofs to
Appendix~\ref{sec:proofs} for readability.

%%%%%%%%%%%%%%%%%%%%%%%%%%%%%%%%%%%%%%%%%%%%%%%%%%%%%%%%%%%%%%%%%%%%%%%%%%%%
%%%%%%%%%%%%%%%%%%%%%%%%%%%%%%%%%%%%%%%%%%%%%%%%%%%%%%%%%%%%%%%%%%%%%%%%%%%%
%%%%%%%%%%%%%%%%%%%%%%%%%%%%%%%%%%%%%%%%%%%%%%%%%%%%%%%%%%%%%%%%%%%%%%%%%%%%
\section{Setting and Notation} 
\label{sec:setting}
%%%%%%%%%%%%%%%%%%%%%%%%%%%%%%%%%%%%%%%%%%%%%%%%%%%%%%%%%%%%%%%%%%%%%%%%%%%%
%%%%%%%%%%%%%%%%%%%%%%%%%%%%%%%%%%%%%%%%%%%%%%%%%%%%%%%%%%%%%%%%%%%%%%%%%%%%
%%%%%%%%%%%%%%%%%%%%%%%%%%%%%%%%%%%%%%%%%%%%%%%%%%%%%%%%%%%%%%%%%%%%%%%%%%%%

We consider the linear regression model
$$
y = X\beta + \eps,
$$
where $y \in \R^n$ is the response vector, $X \in \R^{n \times p}$ the
non-stochastic regressor matrix assumed to have full column rank,
$\beta \in \R^p$ the unknown parameter vector and $\eps \in \R^n$ the
unobserved stochastic error term consisting of independent and
identically distributed components with mean zero and finite second
moments, defined on some probability space
$(\Omega,\mathcal{F},\mathbb{P})$. To define the adaptive Lasso
estimator, first introduced by \cite{Zou06}, let
$$
L_n(b) = \|y - Xb\|^2 + 2 \sum_{j=1}^p \lambda_j \frac{|b_j|}{|\betaLSj|},
$$
where $\|.\|$ is the Euclidean norm, $\lambda_j$ are non-negative
tuning parameters, and $\betaLS = (X'X)^{-1}X'y$ is the ordinary
least-squares (LS) estimator. We assume the event $\{\betaLSj = 0\}$
to have zero probability for all $j=1,\dots,p$ and do not consider
this event occurring in the subsequent analysis. The adaptive Lasso
estimator we employ is given by
$$
\betaAL = \argmin_{b \in \R^p} L_n(b),
$$
which always exists and is uniquely defined in our setting. Note that,
in contrast to \cite{Zou06}, we allow for \emph{componentwise partial
tuning} where the tuning parameter may vary over coordinates and may
be equal to zero, so that not all components need to be penalized.
This is unlike the typical case of \emph{uniform tuning} with a single
positive tuning parameter. We also look at the leading case of
$\omega_j = 1/|\betaLSj|^\gamma$ with $\gamma = 1$, in the notation of
\cite{Zou06}. For all asymptotic considerations, we will assume that
$X'X/n$ converges to a positive definite matrix $C \in \R^{p \times
p}$ as $n \to \infty$.

\smallskip

We define the true active set $\A$ to be $\A = \{j: \beta_j \neq 0\}$.
The quantity $\lambda^*$ is given by the largest tuning parameter,
$\lambda^* = \max_{1 \leq j \leq p} \lambda_j$. We use $\Rquer$ for
the extended real line. Finally, the symbol $\dto$ stands for
convergence in distribution. For the sake of readability, we suppress
the dependence of the following quantities on $n$ in the notation:
$y$, $X$, $\eps$, $\betaAL$, $\betaLS$, $\lambda_j$ and $\lambda^*$.

%%%%%%%%%%%%%%%%%%%%%%%%%%%%%%%%%%%%%%%%%%%%%%%%%%%%%%%%%%%%%%%%%%%%%%%%%%%%
%%%%%%%%%%%%%%%%%%%%%%%%%%%%%%%%%%%%%%%%%%%%%%%%%%%%%%%%%%%%%%%%%%%%%%%%%%%%
\section{Relationship to LS estimator} \label{sec:AL_LS}
%%%%%%%%%%%%%%%%%%%%%%%%%%%%%%%%%%%%%%%%%%%%%%%%%%%%%%%%%%%%%%%%%%%%%%%%%%%%
%%%%%%%%%%%%%%%%%%%%%%%%%%%%%%%%%%%%%%%%%%%%%%%%%%%%%%%%%%%%%%%%%%%%%%%%%%%%

The following finite-sample relationship between the adaptive Lasso
and the LS estimator is essential for proving the results in the
subsequent section and will also give some insights for understanding
the idea behind the results on the shape of the confidence regions in
Sections~\ref{sec:asymp_dist} and \ref{sec:conf_sets}. The lemma shows
that the difference between the adaptive Lasso and the LS estimator is
always contained in a bounded and closed set that depends on the
regressor matrix as well as on the tuning parameters. Note that the
statements in Lemma~\ref{lem:AL_LS} and Corollary~\ref{cor:AL_LS} hold
for all $\omega \in \Omega$, i.e., ``surely''.
\begin{lemma}[Relationship to LS estimator] \label{lem:AL_LS}
$$
\betaAL - \betaLS \in \{z \in \R^p: (X'Xz)_j = 0 \text{ for } \lambda_j = 0, z_j(X'X z)_j \leq
\lambda_j \text{ for } \lambda_j > 0, j = 1,\dots,p\} \;
$$
for all $\omega \in \Omega$.
\end{lemma}

Lemma~\ref{lem:AL_LS} can be used to determine under which tuning
regime the adaptive Lasso is asymptotically behaving the same as the
LS estimator, as is stated in the following corollary.

\begin{corollary}[Equivalence to LS estimator] \label{cor:AL_LS}
If $\lambda^* \to 0$, $\betaAL$ and $\betaLS$ are asymptotically equivalent in the sense that
$$
\sqrt{n}(\betaAL - \betaLS) \to 0 \;\;\; \text{ as } n \to \infty \;
\text{ for all } \omega \in \Omega.
$$
\end{corollary}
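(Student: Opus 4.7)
The plan is to apply Lemma~\ref{lem:AL_LS} directly and then exploit positive definiteness of the limiting Gram matrix $C$. Set $z = \betaAL - \betaLS$. The lemma tells us that deterministically (for every $\omega \in \Omega$) we have $z_j (X'Xz)_j \leq \lambda_j$ for each $j=1,\dots,p$.

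The key first step is to sum these componentwise inequalities. Since $\sum_{j=1}^p z_j (X'Xz)_j = z'X'Xz = \|Xz\|^2$, summing yields the single scalar inequality
$$
\|Xz\|^2 \;\leq\; \sum_{j=1}^p \lambda_j \;\leq\; p\,\lambda^*.
$$
Note that no individual term $z_j(X'Xz)_j$ need be nonnegative, but the sum is a nonnegative quadratic form, which is exactly what makes this aggregation work.

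Next, I would convert the quadratic-form bound into a bound on $\|z\|$ using the spectral assumption. Because $X'X/n \to C$ with $C$ positive definite, the smallest eigenvalue of $X'X/n$ is bounded below by some $c > 0$ for all sufficiently large $n$ (e.g.\ take $c = \lambda_{\min}(C)/2$). Hence for large $n$,
$$
\|Xz\|^2 = z'X'Xz \;\geq\; n\,c\,\|z\|^2,
$$
and combining with the previous display gives $n\,c\,\|z\|^2 \leq p\,\lambda^*$, i.e.
$$
n\,\|\betaAL - \betaLS\|^2 \;\leq\; \frac{p\,\lambda^*}{c}.
$$
Since $\lambda^* \to 0$ by assumption, the right-hand side tends to zero, so $\sqrt{n}\,\|\betaAL - \betaLS\| \to 0$, which yields the claim.

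I do not foresee a serious obstacle here; the whole argument is essentially a two-line computation once Lemma~\ref{lem:AL_LS} is in hand. The only point requiring a little care is remembering that the bound from the lemma is deterministic (holds pathwise), so the resulting convergence $\sqrt{n}(\betaAL-\betaLS)\to 0$ also holds in a deterministic (uniform in $\omega$) sense, not merely in probability. I would also briefly note that the factor $p$ is harmless because $p$ is fixed in the low-dimensional setting considered here.
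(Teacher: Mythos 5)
Your proof is correct and follows essentially the same route as the paper: sum the componentwise inequalities from Lemma~\ref{lem:AL_LS} to bound the quadratic form $(\betaAL-\betaLS)'X'X(\betaAL-\betaLS)$ by $p\lambda^*$, then use the positive definiteness of the limit $C$ of $X'X/n$ to convert this into a bound on $n\|\betaAL-\betaLS\|^2$. The paper states this in one display; your version merely makes the eigenvalue lower bound and the pathwise (deterministic) nature of the convergence explicit, which matches the paper's remark that Lemma~\ref{lem:AL_LS} and Corollary~\ref{cor:AL_LS} hold for all $\omega \in \Omega$.
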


Corollary~\ref{cor:AL_LS} shows that in case $\lambda^* \to 0$, the
adaptive Lasso estimator is asymptotically equivalent to the LS
estimator, so that this case becomes a trivial one. How the estimator
behaves in terms of parameter estimation and model selection for
different asymptotic tuning regimes is treated in the next section.

%%%%%%%%%%%%%%%%%%%%%%%%%%%%%%%%%%%%%%%%%%%%%%%%%%%%%%%%%%%%%%%%%%%%%%%%%%%%
%%%%%%%%%%%%%%%%%%%%%%%%%%%%%%%%%%%%%%%%%%%%%%%%%%%%%%%%%%%%%%%%%%%%%%%%%%%%
\section{Consistency in parameter estimation and model selection} \label{sec:consistency}
%%%%%%%%%%%%%%%%%%%%%%%%%%%%%%%%%%%%%%%%%%%%%%%%%%%%%%%%%%%%%%%%%%%%%%%%%%%%
%%%%%%%%%%%%%%%%%%%%%%%%%%%%%%%%%%%%%%%%%%%%%%%%%%%%%%%%%%%%%%%%%%%%%%%%%%%%

We start our investigation by deriving the pointwise convergence rate
of the estimator.

\begin{proposition}[Pointwise convergence rate]  \label{prop:rate_point}
Let $a_n = \min(\sqrt{n},n/\lambda^*)$. Then the adaptive Lasso
estimator is pointwise $a_n$-consistent for $\beta$ in the sense that
for every $\delta > 0$, there exists a real number $M_{\beta,\delta}$
such that
$$
\sup_{n \in \N}\ \P_\beta \left(a_n \|\betaAL-\beta\| >
M_{\beta,\delta} \right) \leq \delta.
$$
\end{proposition}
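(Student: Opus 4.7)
The plan I would follow is a localization argument based on the convexity of $L_n$. Reparametrizing with $u = a_n(b-\beta)$, the centered objective $V_n(u) = L_n(\beta + u/a_n) - L_n(\beta)$ is strictly convex in $u$ (the quadratic part is strictly convex since $X'X$ is positive definite, the penalty is convex), satisfies $V_n(0) = 0$, and is uniquely minimized at $\hat u_n := a_n(\betaAL - \beta)$. By convexity, if $\inf_{\|u\|=M} V_n(u) > 0$ then $\|\hat u_n\| < M$: otherwise $\tilde u := (M/\|\hat u_n\|)\hat u_n$ lies on the boundary and $V_n(\tilde u) \leq (M/\|\hat u_n\|) V_n(\hat u_n) + (1-M/\|\hat u_n\|)V_n(0) \leq 0$, a contradiction. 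Hence it suffices to show that for every $\delta>0$ there is $M$ with $\sup_n \P_\beta(\inf_{\|u\|=M} V_n(u) \leq 0) \leq \delta$.

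Plugging in $y = X\beta + \eps$, I would expand
$$V_n(u) = \frac{u'X'Xu}{a_n^2} - \frac{2\eps'Xu}{a_n} + 2\sum_{j=1}^p \frac{\lambda_j}{|\betaLSj|}\bigl(|\beta_j + u_j/a_n| - |\beta_j|\bigr).$$
Two simple observations on the penalty simplify the task. For $j \notin \A$ one has $\beta_j = 0$, so the summand equals $\lambda_j |u_j|/(a_n|\betaLSj|) \geq 0$ and can be dropped for a lower bound. For $j \in \A$, the triangle inequality gives $|\beta_j + u_j/a_n| - |\beta_j| \geq -|u_j|/a_n$. Combining these with the quadratic bound $u'X'Xu \geq \kappa n \|u\|^2$ (valid for large $n$ with some $\kappa > 0$ by positive definiteness of $X'X/n \to C$) and Cauchy--Schwarz $|\eps'Xu| \leq \|X'\eps\|\|u\|$, I obtain
$$V_n(u) \geq \frac{\kappa n}{a_n^2}\|u\|^2 - T_n\|u\|, \qquad T_n := \frac{2\|X'\eps\|}{a_n} + \frac{2}{a_n}\sum_{j \in \A}\frac{\lambda_j}{|\betaLSj|},$$
which is positive on $\{\|u\| = M\}$ as soon as $M > T_n a_n^2/(\kappa n)$.

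It then remains to check that $T_n a_n^2/n$ is uniformly tight in $n$. Since $\eps$ has finite second moments, Chebyshev together with $X'X/n \to C$ makes $\|X'\eps\|/\sqrt n$ uniformly tight. For $j \in \A$ the quantity $|\betaLSj|^{-1}$ is uniformly tight as well, because $\betaLS - \beta$ has uniformly bounded variance and $\beta_j \neq 0$. Hence $T_n$ is of order $O_p\!\bigl((\sqrt n + \lambda^*)/a_n\bigr)$ uniformly, and the definition $a_n = \min(\sqrt n, n/\lambda^*)$ is tailored exactly to the task: $a_n \leq \sqrt n$ gives $\sqrt n\, a_n/n \leq 1$, while $a_n \leq n/\lambda^*$ gives $\lambda^* a_n/n \leq 1$, so $T_n a_n^2/n = O_p(1)$ uniformly in $n$. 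Choosing $M$ large enough then delivers the required uniform probability bound.

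The main obstacle I anticipate is this last rate bookkeeping: because $a_n$ may oscillate between the two regimes $\sqrt n$ and $n/\lambda^*$, the estimate must be produced without passing to a subsequence, and the interplay between $\lambda^*$ appearing in both the penalty and in $a_n$ itself is what makes the bound just barely work. A secondary subtlety is converting asymptotic statements (e.g.\ the eigenvalue bound giving $\kappa$, or the stochastic tightness of $\|X'\eps\|/\sqrt n$) into control that is truly uniform over all $n \in \N$, which I would handle by using Chebyshev-type inequalities valid for every $n \geq N_0$ and separately enlarging $M$ to absorb the finitely many small $n \leq N_0$, for which $a_n\|\betaAL - \beta\|$ is already a proper random variable.
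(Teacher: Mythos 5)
Your proposal is correct and takes essentially the same route as the paper's proof: the same reparametrization $u = a_n(b-\beta)$, the same decomposition of the centered objective, dropping the non-active penalty terms and bounding the active ones by the triangle inequality, and the same key bookkeeping $a_n/\sqrt{n} \leq 1$, $a_n\lambda_j/n \leq 1$ together with tightness of $\|X'\eps\|/\sqrt{n}$ and $1/|\betaLSj|$ for $j \in \A$. The only cosmetic difference is that you conclude via the convexity--sphere localization argument, whereas the paper plugs the minimizer directly into the inequality $H_{n,\beta}(a_n(\betaAL-\beta)) \leq 0$ and divides by $\|a_n(\betaAL-\beta)\|$; the two are equivalent.
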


\smallskip

The fact that the pointwise convergence rate is given by $n^{1/2}$
only if $\lambda^*/n^{1/2}$ does not diverge has implicitly been noted
in \cite{Zou06}'s oracle property in Theorem~2 in that reference,
reflected in the assumption of $\lambda^*/n^{1/2} \to 0$\footnote{Note
that $\lambda_n$ in that reference corresponds to $2\lambda^*$ in our
notation, assuming uniform tuning over all components.}. In the
one-dimensional case, it can be learned from Theorem~5 Part~2 in
\cite{PoetscherSchneider09} that the sequence $n^{1/2}(\betaAL -
\beta)$ is not stochastically bounded if $\lambda^*/n^{1/2}$
diverges\footnote{To make the connection from that reference to our
notation, note that $p=1$ there and set $\theta_n = \beta$ and
$n\mu_n^2 = \lambda^*$.}. However, neither of these references
determine the slower rate of $n/\lambda^*$ explicitly when it applies.

The uniform convergence rate is presented in the next proposition.

\begin{proposition}[Uniform convergence rate] \label{prop:rate_unif}
Let $b_n = \min(\sqrt{n},\sqrt{n/\lambda^*})$. Then the adaptive Lasso
estimator is uniform $b_n$-consistent for $\beta$ in the sense that
for every $\delta > 0$, there exists a real number $M_\delta$ such
that
$$
\sup_{n \in \N}\ \sup_{\beta \in \R^p}\ \P_\beta \left(b_n
\|\betaAL-\beta\| > M_\delta \right) \leq \delta.
$$
\end{proposition}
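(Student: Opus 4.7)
The plan is to reduce the claim to a deterministic bound on $\betaAL - \betaLS$ coming from Lemma~\ref{lem:AL_LS}, combined with the trivial uniform $\sqrt{n}$-consistency of the least-squares estimator. Write
$$
b_n \|\betaAL - \beta\| \leq b_n \|\betaAL - \betaLS\| + b_n \|\betaLS - \beta\|
$$
and treat the two terms separately. The second term is easy: since $\betaLS - \beta = (X'X)^{-1}X'\eps$, the distribution of $\sqrt{n}(\betaLS - \beta)$ does not depend on $\beta$ at all, and is stochastically bounded under the assumptions on $\eps$ and on $X'X/n$. Because $b_n \leq \sqrt{n}$, this term is uniformly $O_P(1)$.

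The core of the argument is therefore the first term, where I would extract a \emph{deterministic} uniform bound from Lemma~\ref{lem:AL_LS}. For any $z$ in the set appearing there, summing the componentwise inequalities gives
$$
z' X'X z = \sum_{j=1}^p z_j (X'X z)_j \leq \sum_{j=1}^p \lambda_j \leq p\lambda^*.
$$
Let $\mu_n$ denote the smallest eigenvalue of $X'X/n$. Since $X'X/n \to C$ with $C$ positive definite, $\mu_n$ is eventually bounded below by some positive constant $\mu$, so $n\mu \|z\|^2 \leq z'X'X z \leq p\lambda^*$, i.e.\ $\|z\| \leq \sqrt{p\lambda^*/(n\mu)}$ for all $n$ large. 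Applying this to $z = \betaAL - \betaLS$ yields
$$
\|\betaAL - \betaLS\| \leq \sqrt{p\lambda^*/(n\mu)} \qquad \text{(deterministically, eventually in } n\text{).}
$$

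Combining, $b_n \|\betaAL - \betaLS\| \leq \sqrt{p/\mu}\cdot b_n \sqrt{\lambda^*/n}$, and a quick case distinction confirms that $b_n \sqrt{\lambda^*/n} = \min(\sqrt{\lambda^*}, 1) \leq 1$ in either regime $\lambda^* \leq 1$ or $\lambda^* \geq 1$. So the first term is bounded by a deterministic constant (for all $n$ sufficiently large, uniformly in $\beta$), and the second term is $O_P(1)$ uniformly in $\beta$, yielding the claim.

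In terms of difficulty, there is no real obstacle once Lemma~\ref{lem:AL_LS} is in hand; the only subtlety worth flagging is making sure the bound from $z'X'Xz \leq p\lambda^*$ is valid despite the componentwise inequalities $z_j(X'Xz)_j \leq \lambda_j$ allowing individual summands to be negative — which is fine, since the sum $z'X'Xz$ is nonnegative and the upper bound $\sum_j \lambda_j \leq p\lambda^*$ still applies. Handling the finitely many small $n$ separately (before $\mu_n$ stabilizes) is routine.
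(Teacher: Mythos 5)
Your proposal is correct and follows essentially the same route as the paper's proof: decompose $\betaAL-\beta$ into $(\betaAL-\betaLS)+(\betaLS-\beta)$, bound the first term deterministically via Lemma~\ref{lem:AL_LS} (summing the componentwise inequalities to get $(\betaAL-\betaLS)'X'X(\betaAL-\betaLS)\leq p\lambda^*$, then using the eigenvalue lower bound and $b_n^2\lambda^*/n\leq 1$), and handle the second term by the uniform $\sqrt{n}$-consistency of the LS estimator. The only cosmetic difference is that the paper takes the positive infimum of the eigenvalues of $X'X/n$ over all $n$ at once, whereas you argue eventually in $n$ and treat the finitely many remaining indices separately.
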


Proposition~\ref{prop:rate_unif} shows that the uniform convergence
rate is slower than $n^{1/2}$ if $\lambda^* \to \infty$. The fact that
the uniform rate may differ from the pointwise one has been noted in
\cite{PoetscherSchneider09}. Unless the estimator is inconsistent in
parameter estimation, the uniform convergence from
Proposition~\ref{prop:rate_unif} is slower than the pointwise one and
can, indeed, not be improved upon. The latter statement is
substantiated by Theorem~\ref{thm:asymp_dist} in
Section~\ref{sec:asymp_dist}, which shows that the limit of
$b_n(\betaAL - \beta_n)$ is non-zero for certain sequences $\beta_n$.

\begin{theorem}[Consistency in parameter estimation] \label{thm:consist_param}
The following statements are equivalent.

\begin{enumerate} 

\item \label{item:ptw_consist} $\betaAL$ is pointwise consistent for $\beta$.

\item \label{item:unif_consist} $\betaAL$ is uniformly consistent for $\beta$.

\item \label{item:cond_lambda} $\lambda^*/n \to 0$ as $n \to \infty$.

\item \label{item:correct_ms} $\lim\limits_{n \to \infty} \P_\beta(\betaALj =
0) = 0$ whenever $j \in \A$.

\end{enumerate}

\end{theorem}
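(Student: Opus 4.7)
My plan is to establish $(a) \Leftrightarrow (b) \Leftrightarrow (c)$ among the first three statements as a cycle, then splice in (d) via $(b) \Rightarrow (d)$ and $(d) \Rightarrow (c)$. Three of the implications are immediate: $(b) \Rightarrow (a)$ is trivial; $(c) \Rightarrow (b)$ is a direct application of Proposition~\ref{prop:rate_unif}, since the uniform rate $b_n = \min(\sqrt n, \sqrt{n/\lambda^*})$ diverges if and only if $\lambda^*/n \to 0$; and $(b) \Rightarrow (d)$ follows because for $\beta_j \neq 0$,
$$
\P_\beta(\betaALj = 0) \leq \P_\beta(|\betaALj - \beta_j| \geq |\beta_j|) \to 0.
$$

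For $(a) \Rightarrow (c)$ I would argue by contrapositive. If $\lambda^*/n \not\to 0$, pass to a subsequence along which $\lambda_j/n \geq c > 0$ for some fixed index $j$. Pick $\beta$ with $\beta_j \neq 0$ and assume for contradiction $\betaAL \pto \beta$; then $\P_\beta(\betaALj \neq 0) \to 1$, and on that event the $j$-th KKT condition for $\betaAL$, combined with the normal equation $X'(y - X\betaLS) = 0$, yields
$$
\bigl(X'X(\betaLS - \betaAL)\bigr)_j = \frac{\lambda_j}{|\betaLSj|}\,\sign(\betaALj).
$$
Dividing by $n$, the left-hand side converges to $0$ in probability (since $X'X/n \to C$ and $\betaLS - \betaAL \pto 0$) while the right-hand side is asymptotically bounded below in absolute value by $c/|\beta_j| > 0$, a contradiction.

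For $(d) \Rightarrow (c)$ I would again argue by contrapositive using the same subsequence with $\lambda_j/n \geq c > 0$, now choosing $\beta = \alpha e_j$ with $\alpha$ nonzero and small (to be determined). The optimality inequality $L_n(\betaAL) \leq L_n(0) = \|y\|^2 = O_p(n)$ combined with positive definiteness of $C$ gives $\|\betaAL\| = O_p(1)$ with a bound independent of $\alpha$ as long as $|\alpha| \leq 1$; together with $\betaLS \pto \alpha e_j$, this yields $\|\betaLS - \betaAL\| \leq D$ with probability tending to one for some constant $D$ independent of $\alpha$. On the event $\betaALj \neq 0$, however, the KKT identity above and $|\betaLSj| \to |\alpha|$ force
$$
\|(X'X)_{j,\cdot}\|\cdot D \;\geq\; \bigl|\bigl(X'X(\betaLS - \betaAL)\bigr)_j\bigr| \;=\; \frac{\lambda_j}{|\betaLSj|} \;\geq\; \frac{cn}{2|\alpha|}
$$
with probability tending to one, and since $\|(X'X)_{j,\cdot}\| = O(n)$ this forces $|\alpha|$ to exceed a positive constant depending only on $D$ and $C$. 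Choosing $|\alpha|$ strictly smaller then gives $\P_\beta(\betaALj = 0) \to 1$ along the subsequence, contradicting (d) since $j \in \A$ by construction.

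The hardest step is the last one: one must exhibit a single parameter $\beta$ for which $j$ lies in the active set \emph{and} $\betaALj$ is shrunk to zero with probability tending to one. The key observation making this work is that shrinking $|\alpha|$ inflates the penalty coefficient $\lambda_j/|\betaLSj|$ without affecting the uniform bound $D$ on $\|\betaLS - \betaAL\|$, opening up the required gap for small nonzero $\alpha$.
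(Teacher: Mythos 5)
Your proof is correct, and four of its five implications follow the paper's route essentially verbatim: (c)$\Rightarrow$(b) from Proposition~\ref{prop:rate_unif}, the trivial (b)$\Rightarrow$(a), the KKT/normal-equation contradiction for (a)$\Rightarrow$(c), and the elementary bound for (b)$\Rightarrow$(d) (which the paper phrases as (a)$\Rightarrow$(d)); the overall logical skeleton is the same equivalence cycle. Where you genuinely depart from the paper is (d)$\Rightarrow$(c). The paper distinguishes two cases along the subsequence: if $\lambda_j/n_k \to \infty$, it uses the optimality inequality $L_n(\betaAL) \leq L_n(0)$ to get stochastic boundedness of $\betaAL$ for fixed $\beta$ and concludes $\P_\beta(\betaALj = 0) \to 1$ for \emph{every} $\beta_j$; if $\lambda_j/n_k \to c < \infty$, it instead invokes Lemma~\ref{lem:AL_LS} to obtain a deterministic bound $\left\|\frac{X'X}{n}(\betaAL - \betaLS)\right\|_\infty \leq L$ valid uniformly in $\beta$, and concludes $\P_\beta(\betaALj = 0) \to 1$ whenever $0 < |\beta_j| < c/L$. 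You replace this case split by a single argument: you only use the lower bound $\lambda_j/n \geq c > 0$, obtain the bound $D$ on $\|\betaLS - \betaAL\|$ from the optimality inequality while tracking explicitly that $D$ does not depend on the signal size $\alpha$, and then force the contradiction by taking $\alpha$ small --- in effect extending the paper's small-$|\beta_j|$ device to cover the divergent case as well. This buys a real simplification (no case distinction, and Lemma~\ref{lem:AL_LS} is never needed beyond the KKT identity \eqref{eq:AL_LS2}), at the cost of a marginally weaker conclusion in the divergent case, which is irrelevant for contradicting (d). One minor imprecision: the statement ``$\|\betaAL\| = O_p(1)$'' is, by itself, too weak to produce a single constant $D$ with $\P_\beta(\|\betaLS - \betaAL\| \leq D) \to 1$. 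However, the bound your derivation actually yields is of the form $\mathrm{const} + o_p(1)$, since $X'\eps/n \pto 0$, which does suffice; and even the weaker $O_p(1)$ version would do, because fixing $D$ so that the bound fails with probability at most $1/2$ gives $\liminf_n \P_\beta(\betaALj = 0) \geq 1/2$, already contradicting (d).
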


Condition \ref{item:correct_ms} in Theorem~\ref{thm:consist_param}
states that the adaptive Lasso never chooses underparametrized models
with asymptotic probability equal to 1. It underlines the fact that
$\lambda^*/n \to 0$ is a basic condition that we will assume in all
subsequent statements.

\begin{theorem}[Consistency in model selection] \label{thm:consist_model}
Suppose that $\lambda^*/n \to 0$ as $n \to \infty$. If $\lambda_j \to
\infty$ as well as $\sqrt{n}\lambda_j/\lambda^* \to \infty$ as $n \to
\infty$ for all $j = 1,\dots,p$, then the adaptive Lasso estimator
performs consistent model selection in the sense that
$$
\lim_{n \to \infty}\P_\beta(\betaALj \neq 0 \iff j \in \A) = 1 \text{ as } n \to
\infty.
$$
\end{theorem}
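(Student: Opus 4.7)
The claim splits into two halves: for $j \in \A$, the event $\{\betaALj \neq 0\}$ holds with probability tending to $1$; and for $j \notin \A$, the event $\{\betaALj = 0\}$ holds with probability tending to $1$. The first half is immediate from Theorem~\ref{thm:consist_param}, whose equivalence of conditions \ref{item:cond_lambda} and \ref{item:correct_ms} turns the standing assumption $\lambda^*/n\to 0$ into $\P_\beta(\betaALj = 0) \to 0$ for each $j \in \A$. A union bound over the finitely many coordinates then combines the two halves into the joint statement. Thus the task reduces to the sparsity half: $\P_\beta(\betaALj = 0) \to 1$ whenever $j \notin \A$.

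The plan is to run the standard oracle / KKT argument. Let $\tilde\beta \in \R^p$ be the unique minimizer of $L_n$ constrained to the subspace $\{b \in \R^p : b_j = 0 \text{ for } j \notin \A\}$; uniqueness holds for $n$ large because $X_A'X_A/n \to C_{\A,\A}$ is positive definite (as the principal submatrix of a positive definite matrix). By construction $\tilde\beta$ satisfies the subgradient optimality conditions of the full (unrestricted) problem on the coordinates in $\A$, so it equals the global minimizer $\betaAL$ on the event
\[
E_n \;=\; \bigcap_{j \notin \A} \Bigl\{ \bigl| (X'(y - X\tilde\beta))_j \bigr| \;\leq\; \lambda_j / |\betaLSj| \Bigr\}.
\]
On $E_n$ we have $\betaALj = \tilde\beta_j = 0$ for every $j \notin \A$, so it suffices to prove $\P_\beta(E_n) \to 1$.

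For $j \notin \A$, write $y - X\tilde\beta = \eps + X(\beta - \tilde\beta)$ (using $\beta_{\A^c} = \tilde\beta_{\A^c} = 0$), so that
\[
\bigl(X'(y - X\tilde\beta)\bigr)_j \;=\; (X'\eps)_j + \bigl(X'X(\beta - \tilde\beta)\bigr)_j.
\]
The first term is $O_p(\sqrt{n})$ by the central limit theorem. For the second, the plan is to apply Proposition~\ref{prop:rate_point} to the restricted adaptive Lasso on $\A$: its Gram matrix $X_A'X_A/n$ converges to $C_{\A,\A}$ and its tuning parameters are dominated by $\lambda^*$, so $\tilde\beta - \beta = O_p(1/a_n)$ with $a_n = \min(\sqrt{n}, n/\lambda^*)$; since the entries of $X'X$ are $O(n)$, this yields the second term as $O_p(n/a_n) = O_p(\max(\sqrt{n}, \lambda^*))$. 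On the right-hand side, $\sqrt{n}\betaLSj = O_p(1)$ implies $1/|\betaLSj| \geq \sqrt{n}/K$ with probability at least $1-\delta$ for suitably chosen $K$, whence $\lambda_j/|\betaLSj|$ is of order at least $\sqrt{n}\lambda_j$. Comparing the two sides, the $j$-th inequality in $E_n$ holds with probability tending to $1$ as soon as $\sqrt{n}\lambda_j$ eventually dominates $\max(\sqrt{n}, \lambda^*)$, which rearranges to exactly $\lambda_j \to \infty$ together with $\sqrt{n}\lambda_j/\lambda^* \to \infty$ — the stated hypotheses.

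The main technical obstacle will be justifying the rate $\tilde\beta - \beta = O_p(1/a_n)$, since Proposition~\ref{prop:rate_point} is literally stated for the full estimator $\betaAL$ rather than for $\tilde\beta$. The transfer is plausible because the restricted problem is itself an adaptive Lasso in dimension $|\A|$; the only mismatch is that the weights $1/|\betaLSj|$ use the full-model LS rather than the restricted LS, but on active coordinates both converge to $\beta_j \neq 0$, so the two sets of weights agree asymptotically and the argument behind Proposition~\ref{prop:rate_point} goes through. Alternatively, one may bound $\tilde\beta - \beta$ directly by applying the restricted analogue of Lemma~\ref{lem:AL_LS} and combining it with $\sqrt{n}$-consistency of the restricted LS estimator; either route is routine but needs to be spelled out carefully.
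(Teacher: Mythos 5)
Your proposal is correct, but it proves the sparsity half by a genuinely different route than the paper. The paper works directly with $\betaAL$ itself: for $\beta_j = 0$, on the event $\{\betaALj \neq 0\}$ the exact stationarity condition $\partial L_n/\partial b_j = 0$ gives
$$
\left|\left(\frac{X'X}{n}\bigl(a_n(\betaAL - \beta)\bigr)\right)_j - \frac{a_n}{\sqrt{n}}\frac{(X'\eps)_j}{\sqrt{n}}\right| \;=\; \frac{\lambda_j}{\sqrt{n}\,|\betaLSj|}\frac{a_n}{\sqrt{n}},
$$
whose left-hand side is stochastically bounded by Proposition~\ref{prop:rate_point}, while the right-hand side diverges in probability under the stated hypotheses (splitting into the cases $a_n = \sqrt{n}$ and $a_n = n/\lambda^*$); hence $\P_\beta(\betaALj \neq 0) \to 0$, with no auxiliary estimator needed. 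You instead run the primal--dual witness construction: build the oracle estimator $\tilde\beta$ on $\A$, verify dual feasibility on $\A^c$, and conclude $\betaAL = \tilde\beta$ on $E_n$. Both arguments turn on the same quantitative comparison --- a bias term of size $O_p(n/a_n) = O_p(\max(\sqrt{n},\lambda^*))$ against a penalty threshold of order $\sqrt{n}\lambda_j$ --- which is why both recover exactly the conditions $\lambda_j \to \infty$ and $\sqrt{n}\lambda_j/\lambda^* \to \infty$. The paper's route buys economy: the KKT condition is applied to $\betaAL$ itself, so the needed $a_n$-rate is literally Proposition~\ref{prop:rate_point} and no rate transfer arises. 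Your route buys a stronger conclusion: with probability tending to one, $\betaAL$ coincides with the restricted oracle minimizer, the natural stepping stone to oracle-type distributional statements. Your identification of the rate transfer as the main obstacle, and your sketch of why the proof of Proposition~\ref{prop:rate_point} carries over to the restricted problem (it only uses tightness of $1/|\betaLSj|$ for $j \in \A$ and of $X_{\A}'\eps/\sqrt{n}$, so full-model weights are harmless), are both correct.

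One caveat: your closing ``alternative route'' --- bounding $\tilde\beta - \beta$ via a restricted analogue of Lemma~\ref{lem:AL_LS} combined with $\sqrt{n}$-consistency of the restricted LS estimator --- does not work, so the two routes are not interchangeable as you suggest. That argument yields only $\tilde\beta - \beta = O_p(\sqrt{\lambda^*/n})$, i.e.\ the uniform rate $b_n$ of Proposition~\ref{prop:rate_unif} rather than the pointwise rate $a_n$, making the bias term $O_p(\sqrt{n\lambda^*})$ instead of $O_p(\max(\sqrt{n},\lambda^*))$. The comparison against the threshold $\sqrt{n}\lambda_j$ then demands $\lambda_j/\sqrt{\lambda^*} \to \infty$, which is strictly stronger than the hypothesis $\sqrt{n}\lambda_j/\lambda^* \to \infty$: take $\lambda^* = n^{0.9}$ and $\lambda_j = n^{0.41}$, for which all assumptions of the theorem hold while $\lambda_j/\sqrt{\lambda^*} = n^{-0.04} \to 0$. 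Only the route through the transfer of Proposition~\ref{prop:rate_point} is viable, so that transfer must indeed be spelled out, not replaced by the Lemma~\ref{lem:AL_LS} shortcut.
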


\begin{remark*}
Inspecting the proof of Theorem~\ref{thm:consist_model} shows that in
fact a more refined statement than Theorem~\ref{thm:consist_model}
holds. Assume that $\lambda^*/n \to 0$. We then have that
$\P_\beta(\betaALj = 0) \to 0$ whenever $j \in \A$ and
$$
\lambda_j \to \infty \text{ and } \frac{\sqrt{n}\lambda_j}{\lambda^*}
\to \infty \implies \lim_{n \to \infty} \P_\beta(\betaALj = 0) = 1
\text{ for } j \notin \A \implies \lambda_j \to \infty.
$$
This statement is in particular interesting for the case of partial
tuning where some $\lambda_j$ are set to zero and the corresponding
components are not penalized, revealing that the other components can
still be tuned consistently in this case.
\end{remark*}

%%%%%%%%%%%%%%%%%%%%%%%%%%%%%%%%%%%%%%%%%%%%%%%%%%%%%%%%%%%%%%%%%%%%%%%%%%%%
%%%%%%%%%%%%%%%%%%%%%%%%%%%%%%%%%%%%%%%%%%%%%%%%%%%%%%%%%%%%%%%%%%%%%%%%%%%%
\section{Asymptotic distribution} \label{sec:asymp_dist}
%%%%%%%%%%%%%%%%%%%%%%%%%%%%%%%%%%%%%%%%%%%%%%%%%%%%%%%%%%%%%%%%%%%%%%%%%%%%
%%%%%%%%%%%%%%%%%%%%%%%%%%%%%%%%%%%%%%%%%%%%%%%%%%%%%%%%%%%%%%%%%%%%%%%%%%%%

In this section we investigate the asymptotic distribution. We
perform our analysis for the case when $\lambda^* \to \infty$ which,
by Theorem~\ref{thm:consist_model}, encompasses the tuning regime of
consistent model selection and often is the regime of choice in
applications. If the estimator is tuned uniformly over all components,
the condition $\lambda^* \to \infty$ is in fact equivalent to
consistent tuning, given the basic condition of $\lambda^*/n \to 0$.

The requirement $\lambda^* \to \infty$ also corresponds to the case
where the convergence rate of the adaptive Lasso estimator is given by
$(n/\lambda^*)^{1/2}$ rather than $n^{1/2}$, as can be seen from
Proposition~\ref{prop:rate_unif}. \cite{PoetscherSchneider09,
PoetscherSchneider10} demonstrate that in order to get a
representative and full picture of the behavior of the estimator from
asymptotic considerations, one needs to consider a moving-parameter
framework where the unknown parameter $\beta = \beta_n$ is allowed to
depend on sample size. For these reasons, we study the asymptotic
distribution of $(n/\lambda^*)^{1/2}(\betaAL - \beta_n)$, which is
done in the following.

Throughout Section~\ref{sec:asymp_dist} and
Section~\ref{sec:conf_sets}, let $\lambda^0 \in [0,1]^p$ and $\psi \in
[0,\infty]^p$ be defined by
\begin{align*}
\frac{\lambda_j}{\lambda^*} & \to \lambda^0_j \in [0,1] \text{ and }\\[1ex]
\frac{\sqrt{\lambda^*}}{\lambda_j} & \to \psi_j \in [0,\infty], 
\end{align*}
measuring the two different deviations between each tuning parameter
to the maximal one. Note that we have $\lambda^0 = (1,\dots,1)'$ and
$\psi = 0$ for uniform tuning, and that not penalizing the $j$-th
parameter leads to $\psi_j = \infty$ and $\lambda^0_j = 0$. Note that
assuming the existence of these limits does not pose a restriction, as
we could always perform our analyses on convergent subsequences and
characterize the limiting behavior for all accumulation points.

\begin{theorem}[Asymptotic distribution] \label{thm:asymp_dist}
Assume that $\lambda^*/n \to 0$ and $\lambda^* \to \infty$. Moreover,
define $\phi \in \Rquer^p$ by
$\sqrt{n}\beta_{n,j}\sqrt{\lambda^*}/\lambda_j \to \phi_j$ for $j =
1,\dots,p$. Then
$$
\sqrt{\frac{n}{\lambda^*}}(\betaAL - \beta_n) \dto \argmin_{u \in \R^p}
V_\phi(u),
$$
where 
$$
V_\phi(u) = u'Cu + \sum_{j=1}^p \begin{cases}
0 & u_j=0 \text{ or } |\phi_j| = \infty \text{ or } \psi_j = \infty \\ 
\infty & u_j \neq 0 \text{ and } \phi_j = \psi_j = 0 \\ 
2 \frac{|u_j + \lambda^0_j\phi_j| - |\lambda^0_j\phi_j|}{|\phi_j + \psi_jZ_j|} & \text{else},
\end{cases}
$$
with $Z \sim N(0,\sigma^2 C^{-1})$, where $X'X/n \to C$, positive
definite.
\end{theorem}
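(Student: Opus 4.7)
The plan is to follow the convex-argmin approach of Knight--Fu / Geyer, applied to a rescaled objective. Set $u = \sqrt{n/\lambda^*}(b-\beta_n)$ and define
\[
\tilde L_n(u) \;=\; \frac{1}{\lambda^*}\bigl(L_n(\beta_n + \sqrt{\lambda^*/n}\,u) - L_n(\beta_n)\bigr),
\]
which is convex in $u$ and minimized at $\hat u_n := \sqrt{n/\lambda^*}(\betaAL-\beta_n)$. Write $\tilde L_n = Q_n + P_n$, with $Q_n$ from the quadratic fit and $P_n$ from the penalty. Using $y-X\beta_n=\eps$ one obtains
\[
Q_n(u) \;=\; u'(X'X/n)u \;-\; \frac{2}{\sqrt{\lambda^*}}\, u'(X'\eps/\sqrt{n}),
\]
which tends to $u'Cu$ in probability, uniformly on compacts, since $X'X/n\to C$, $X'\eps/\sqrt n = O_p(1)$ by the CLT, and $\lambda^*\to\infty$.

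For the penalty, setting $\xi_{n,j} = \sqrt{n/\lambda^*}\,\beta_{n,j}$ and factoring $\sqrt{\lambda^*/n}$ out of each absolute-value difference gives
\[
P_n(u) \;=\; 2\sum_{j=1}^p \frac{\lambda_j}{\sqrt{n\lambda^*}\,|\betaLSj|}\,\bigl(|\xi_{n,j}+u_j|-|\xi_{n,j}|\bigr).
\]
The key identity is
\[
\frac{\sqrt{n\lambda^*}\,\betaLSj}{\lambda_j} \;=\; \frac{\sqrt{n}\beta_{n,j}\sqrt{\lambda^*}}{\lambda_j} \;+\; \frac{\sqrt{\lambda^*}}{\lambda_j}\,\sqrt{n}(\betaLSj-\beta_{n,j}),
\]
which, together with the joint CLT $\sqrt{n}(\betaLS-\beta_n)\dto Z\sim N(0,\sigma^2 C^{-1})$, yields $\sqrt{n\lambda^*}\,\betaLSj/\lambda_j \dto \phi_j + \psi_j Z_j$. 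Since also $\xi_{n,j}=(\lambda_j/\lambda^*)\cdot(\sqrt n\beta_{n,j}\sqrt{\lambda^*}/\lambda_j)\to\lambda^0_j\phi_j$ in the non-degenerate regime, the generic summand of $P_n(u)$ converges to $2(|u_j+\lambda^0_j\phi_j|-|\lambda^0_j\phi_j|)/|\phi_j+\psi_j Z_j|$. I would then verify the degenerate branches separately: $u_j=0$ gives a zero summand; if $|\phi_j|=\infty$ or $\psi_j=\infty$ the denominator diverges almost surely, so the contribution vanishes; if $\phi_j=\psi_j=0$ and $u_j\neq 0$ the coefficient $\lambda_j/(\sqrt{n\lambda^*}|\betaLSj|)$ diverges in probability while $|\xi_{n,j}+u_j|-|\xi_{n,j}|\to|u_j|>0$, so $P_n(u)\to+\infty$ in probability. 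These cases match the definition of $V_\phi$ exactly.

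Combining the two steps gives finite-dimensional convergence in distribution $\tilde L_n(u)\dto V_\phi(u)$ for every $u\in\R^p$. Since $C$ is positive definite, $V_\phi$ is strictly convex and coercive on its effective domain, hence $\argmin V_\phi$ is almost surely a singleton. Tightness of $\hat u_n$ is supplied by Proposition~\ref{prop:rate_unif}. The convex-process argmin continuous mapping theorem (Geyer 1994; Knight and Fu 2000) then delivers $\hat u_n\dto\argmin_u V_\phi(u)$.

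The main obstacle is the case analysis for $P_n$ in the degenerate situations where one or several of $\phi_j,\psi_j$ are zero or infinite, in which the individual summands either drop out, blow up, or are not of the generic form. These must be handled so that the extended-real-valued $V_\phi$ is a correct limit of $\tilde L_n$ both in finite-dimensional distribution and in terms of effective domain; one must also check that the convex-argmin theorem, usually formulated for real-valued convex processes, still yields convergence of the minimizer when the limiting objective takes the value $+\infty$ on parts of $\R^p$, so that the effective-domain restriction is inherited by $\hat u_n$.
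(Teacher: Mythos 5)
Your setup is the same as the paper's: you rescale the objective in exactly the same way (your $\tilde L_n$ is the paper's $V_{n,\beta_n}$), split off the quadratic part, and carry out the same case analysis for the penalty summands (this is the paper's Lemma~\ref{lem:Anj}; note that for $\psi_j=\infty$ the denominator $|\phi_{n,j}+\psi_{n,j}Z_{n,j}|$ diverges only \emph{in probability}, not almost surely, and because the centering $\phi_{n,j}$ moves with $n$ this step needs a uniformity argument --- the paper invokes Polya's theorem). The genuine gap is in your last step. The convex argmin theorems you cite (Knight--Fu; Geyer) are formulated for convex objectives whose limit is \emph{finite-valued}: in that case pointwise convergence in distribution upgrades to uniform convergence on compacts via Pollard's convexity lemma, hence to epiconvergence, and the argmin map can then be handled. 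But whenever $\phi_j=\psi_j=0$ for some $j$ --- e.g.\ $\beta_{n,j}\equiv 0$ under uniform tuning, a case of central interest --- the limit $V_\phi$ equals $+\infty$ on an open set; its effective domain $\{u: u_j=0 \text{ for all } j\notin I\}$, where $I=\{j:\max(|\phi_j|,\psi_j)>0\}$, has empty interior. Then finite-dimensional convergence of convex functions does \emph{not} imply epiconvergence, and tightness of $\hat u_n$ (Proposition~\ref{prop:rate_unif}) only gives stochastic boundedness of the minimizers, not that their $I^c$-coordinates collapse to $0$. Your closing sentence flags exactly this as something ``one must also check''; that check is the core difficulty of the theorem, not a routine verification, and your proposal does not carry it out.

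For comparison, the paper resolves this in two steps, which is precisely what your proposal is missing. First it shows $m_{n,I^c}\pto 0$ directly, where $m_n=\sqrt{n/\lambda^*}(\betaAL-\beta_n)$: from $V_{n,\beta_n}(m_n)\le V_{n,\beta_n}(0)=0$, the quadratic and linear parts together with the penalty summands indexed by $I$ are stochastically bounded (by Proposition~\ref{prop:rate_unif} and Lemma~\ref{lem:Anj}), while the summands for $j\notin I$ would diverge to $+\infty$ if $m_{n,I^c}$ did not vanish in probability --- a contradiction. Second, it restricts to the coordinates in $I$: it defines $\tilde V_{n,\beta_n}(\tilde u)$ by plugging the random vector $m_{n,I^c}$ into the $I^c$-slots (so that the exact argmin of $\tilde V_{n,\beta_n}$ is $m_{n,I}$) and sets $\tilde V_\phi(\tilde u)=V_\phi(\tilde u,0)$, which is finite and convex on the reduced space; only there does it apply the convexity lemma, epiconvergence, and Geyer's argmin theorem, and it then pieces the minimizer back together using $m_{I^c}=0$. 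Without this reduction (or some substitute for it), your argument is complete only in the special case $\max(|\phi_j|,\psi_j)>0$ for all $j$, where $V_\phi$ is finite on all of $\R^p$.
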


There are a few things worth mentioning about
Theorem~\ref{thm:asymp_dist}. First of all, in contrast to the
one-dimensional case, the asymptotic limit of the appropriately scaled
and centered estimator may still be random. However, this can only
occur if $\psi_j$ is non-zero and finite for some component $j$,
meaning that the maximal tuning parameter diverges faster (in some
sense) than the tuning parameter for the $j$-th component, but not too
much faster. When no randomness occurs in the limit, the rate of the
stochastic component of the estimator is obviously smaller by an order
of magnitude compared to the bias component. In particular, this will
always be the case for uniform tuning when $\psi = 0$.

As is expected, the proof of Theorem~\ref{thm:asymp_dist} will be
carried out by looking at the corresponding asymptotic minimization
problem of the quantity of interest, which can shown to be the
minimization of $V_\phi$. However, since this limiting function is not
finite on an open subset of $\R^p$, the reasoning of why the
appropriate minimizers converge in distribution to the minimizer of
$V_\phi$ is not as straightforward as might be anticipated.

The assumption of $n^{1/2}\beta_n\lambda^{*1/2}/\lambda_j$ converging
in $\Rquer^p$ in the above theorem is not restrictive in the sense
that otherwise, we simply revert to converging subsequences and
characterize the limiting behavior for all accumulation points, which
will prove to be all we need for Proposition~\ref{prop:min_set} and
the confidence regions in Section~\ref{sec:conf_sets}.

While we cannot explicitly minimize $V_\phi$ for a fixed $\phi \in
\R^p$ other than in trivial cases, surprisingly, we can still
explicitly deduce the set of all minimizers of $V_\phi$ over all $\phi
\in \R^p$, which yields the same set regardless of the realization of
$Z$ in $V_\phi$. This is done in the following proposition.

\begin{proposition}[Set of minimizers] \label{prop:min_set}
Define
$$
\M =\M(\lambda^0,\psi) = 
\left\{m \in \R^p: (Cm)_j = 0 \text{ if } \psi_j = \infty, \, m_j(Cm)_j \leq \lambda^0_j 
\text{ if } \psi_j < \infty\right\}.
$$
Then for any $\omega \in \Omega$ we have
$$
\M = \bigcup_{\phi \in \Rquer^p} \argmin_{u \in \R^p} V_\phi(u)(\omega).
$$
\end{proposition}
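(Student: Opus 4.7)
My plan is to attack both inclusions through the first-order condition, exploiting that $V_\phi$ is strictly convex in $u$: the term $u'Cu$ is strictly convex (since $C$ is positive definite) and each penalty summand $p_j(u_j)$ is a (possibly extended-valued) convex function of $u_j$, so any $m$ with $0 \in \partial V_\phi(m)$ is the unique minimizer. By separability the first-order condition reads $(Cm)_j \in -\tfrac12\,\partial p_j(m_j)$ for every $j$. Throughout I use the structural consequence of the definitions $\lambda_j/\lambda^* \to \lambda^0_j$ and $\sqrt{\lambda^*}/\lambda_j \to \psi_j$ under $\lambda^* \to \infty$: namely, $\lambda^0_j > 0$ forces $\psi_j = 0$.

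\textbf{Inclusion $\bigcup_\phi \argmin V_\phi \subseteq \M$.} For $m \in \argmin V_\phi$ I split by the case distinction in the definition of $V_\phi$. If $\psi_j = \infty$ or $|\phi_j| = \infty$ then $p_j \equiv 0$, so the first-order condition gives $(Cm)_j = 0$, matching the requirement for $\M$ in both the $\psi_j = \infty$ and $\psi_j < \infty$ subcases. If $\phi_j = \psi_j = 0$ then $m_j = 0$ is forced, hence $m_j(Cm)_j = 0 \leq \lambda^0_j$. In the remaining case, setting $d_j = |\phi_j + \psi_j Z_j|$, we have $\partial p_j(m_j) = (2/d_j)\,\partial|m_j + \lambda^0_j\phi_j|$, so the first-order condition yields simultaneously $|(Cm)_j| \leq 1/d_j$ and $(Cm)_j(m_j + \lambda^0_j\phi_j) \leq 0$. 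Expanding the latter, $m_j(Cm)_j \leq -\lambda^0_j \phi_j(Cm)_j \leq \lambda^0_j|\phi_j|/d_j$, which equals $\lambda^0_j$ when $\lambda^0_j > 0$ (because then the structural fact gives $\psi_j = 0$ and $d_j = |\phi_j|$) and equals $0 = \lambda^0_j$ when $\lambda^0_j = 0$.

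\textbf{Inclusion $\M \subseteq \bigcup_\phi \argmin V_\phi$, and the main obstacle.} Given $m \in \M$ I build $\phi \in \Rquer^p$ coordinate by coordinate so that the first-order condition at $m$ holds. For $\psi_j = \infty$ we have $(Cm)_j = 0$ by hypothesis and any $\phi_j$ works since $p_j \equiv 0$. For $\psi_j < \infty$ and $\lambda^0_j > 0$ (so $\psi_j = 0$) I set $\phi_j = \infty$ when $(Cm)_j = 0$ and $\phi_j = -1/(Cm)_j$ otherwise; the latter produces $d_j = 1/|(Cm)_j|$ and $(Cm)_j(m_j + \lambda^0_j\phi_j) = m_j(Cm)_j - \lambda^0_j \leq 0$, exactly the sign and magnitude needed for the subgradient inclusion. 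For $\psi_j < \infty$ and $\lambda^0_j = 0$ one has $m_j(Cm)_j \leq 0$: if $m_j = 0$ I pick any admissible $\phi_j$ making $d_j \geq 1/|(Cm)_j|$, and if $m_j \neq 0$ I choose $\phi_j$ solving $|\phi_j + \psi_j Z_j| = 1/|(Cm)_j|$ with appropriate sign, feasible for every $\omega$ using $Z_j$ when $\psi_j > 0$. Strict convexity then upgrades the first-order condition to the global minimum. The hard part is this construction: realizing every $m \in \M$ as a minimizer requires pointwise-in-$\omega$ choice of $\phi$, careful handling of the boundary $m_j(Cm)_j = \lambda^0_j$ where $m_j + \lambda^0_j\phi_j$ lands at the kink of the absolute value and the subgradient absorbs equality, and of vanishing $(Cm)_j$. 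The structural fact $\lambda^0_j > 0 \Rightarrow \psi_j = 0$ is what makes the bound $m_j(Cm)_j \leq \lambda^0_j$ and the whole construction consistent across these cases.
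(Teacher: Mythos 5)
Your route is the same as the paper's: your coordinatewise first-order condition is exactly the paper's Proposition~\ref{prop:Vphi_KKT}, your ``$\subseteq$'' direction is the same case analysis over $(\phi_j,\psi_j)$, and your ``$\supseteq$'' direction is the same device of constructing $\phi$ coordinate by coordinate, pointwise in $\omega$, so that $m$ satisfies the subgradient conditions (the paper's display \eqref{eq:phi}). Two of your details are in fact tidier than the paper's: the unified chain $m_j(Cm)_j \leq -\lambda^0_j\phi_j(Cm)_j \leq \lambda^0_j|\phi_j|/d_j \leq \lambda^0_j$ in the ``$\subseteq$'' part (writing $d_j = |\phi_j + \psi_j Z_j|$ as you do), and the single choice $\phi_j = -1/(Cm)_j$ when $\lambda^0_j > 0$ and $(Cm)_j \neq 0$, which covers in one stroke the two cases the paper treats separately ($\phi_j = -m_j/\lambda^0_j$ when $|m_j(Cm)_j| \leq \lambda^0_j$, and $\phi_j = 1/(Cm)_j$ otherwise); your handling of the kink case $m_j(Cm)_j = \lambda^0_j$ there is correct.

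There is, however, one concrete slip that makes one subcase fail as written. When $\psi_j < \infty$, $\lambda^0_j = 0$ and $m_j = 0$, the $j$-th penalty is $u_j \mapsto 2|u_j|/d_j$, so stationarity at $m_j = 0$ requires $|(Cm)_j| \leq 1/d_j$, i.e.\ $d_j \leq 1/|(Cm)_j|$: the kink can absorb the gradient $2(Cm)_j$ of the quadratic part only if the penalty weight $2/d_j$ is at least $2|(Cm)_j|$. You prescribe the opposite, ``any admissible $\phi_j$ making $d_j \geq 1/|(Cm)_j|$''; for such a $\phi_j$ with strict inequality (possible whenever $(Cm)_j \neq 0$) the first-order condition fails in coordinate $j$, so $m$ is \emph{not} a minimizer of $V_\phi$ and this prescription does not produce the required $\phi$. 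The repair is immediate and restores agreement with the paper: choose $d_j = 1/|(Cm)_j|$, e.g.\ $\phi_j = 1/(Cm)_j - \psi_j Z_j$ (and $\phi_j = \infty$ when $(Cm)_j = 0$), which satisfies the corrected inequality with equality. With this one inequality reversed, your proof is complete and essentially coincides with the paper's.
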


So, while the limit of $(n/\lambda^*)^{1/2}(\betaAL - \beta_n)$ will,
in general, be random, the set $\M$ is not. In fact,
Proposition~\ref{prop:min_set} shows that for any $\omega$, the union
of limits over all possible sequences of unknown parameters is always
given by the same compact set $\M$. This observation is central for
the construction of confidence regions in the following section. It
also shows that while in general, a stochastic component will survive
in the limit, it is always restricted to have bounded support that
depends on the regressor matrix and the tuning parameter through the
matrix $C$ and the quantities $\psi$ and $\lambda^0$. Interestingly,
$\M$ only depends on $\psi$ for the components where $\psi_j =
\infty$, in which case the set $\M$ loses a dimension. This can be
seen as a result of the $j$-th component being penalized much less
than the maximal one so that the scaling factor used in
Theorem~\ref{thm:asymp_dist} is not large enough for this component to
survive in the limit. Note that in case of uniform tuning where $\psi
= 0$ and $\lambda^0 = (1,\dots,1)'$, $\M$ does not depend on the
sequence of tuning parameters at all. Also, we have $\M = [-1,1]$ for
$p=1$ and $C=1$, a fact that has been shown in
\cite{PoetscherSchneider09} and used in \cite{PoetscherSchneider10}.

A simple ``quick-and-dirty'' way to motivate the result in
Proposition~\ref{prop:min_set} is to rewrite
$$
\sqrt{\frac{n}{\lambda^*}}(\betaAL - \beta_n) = 
\sqrt{\frac{n}{\lambda^*}}(\betaAL - \betaLS) + \sqrt{\frac{n}{\lambda^*}}(\betaLS - \beta_n) 
$$
and observe that the second term on the right-hand side is $o_p(1)$
whereas the first term is always contained in the set 
$$
\left\{z \in \R^p : z_j(\frac{X'X}{n}z)_j \leq
\frac{\lambda_j}{\lambda^*} \text{ for } j=1,\dots,p\right\}
$$
by Lemma~\ref{lem:AL_LS}, which contains the set $\M$ in the limit.
Theorem~\ref{thm:asymp_dist} and Proposition~\ref{prop:min_set} can
therefore be viewed as the theory that makes this observation precise
by sharpening the set and showing that it only contains the limits.
This can then be used for constructing confidence regions, which is
done in the following section.

%%%%%%%%%%%%%%%%%%%%%%%%%%%%%%%%%%%%%%%%%%%%%%%%%%%%%%%%%%%%%%%%%%%%%%%%%%%%
%%%%%%%%%%%%%%%%%%%%%%%%%%%%%%%%%%%%%%%%%%%%%%%%%%%%%%%%%%%%%%%%%%%%%%%%%%%%
\section{Confidence regions -- coverage and shape} \label{sec:conf_sets}
%%%%%%%%%%%%%%%%%%%%%%%%%%%%%%%%%%%%%%%%%%%%%%%%%%%%%%%%%%%%%%%%%%%%%%%%%%%%
%%%%%%%%%%%%%%%%%%%%%%%%%%%%%%%%%%%%%%%%%%%%%%%%%%%%%%%%%%%%%%%%%%%%%%%%%%%%

The insights from Theorem~\ref{thm:asymp_dist} and
Proposition~\ref{prop:min_set} can now be used for deriving the
following theorem on confidence regions.

\begin{theorem}[Confidence regions] \label{thm:conf_sets}
Let $\lambda^*/n \to 0$ and $\lambda^* \to \infty$. Then every open
superset $\O$ of $\M$ satisfies
$$
\lim_{n \to \infty} \inf_{\beta \in \R^p} 
P_\beta(\beta \in \betaAL - \sqrt{\frac{\lambda^*}{n}}\O) = 1.
$$
For $d > 0$, define $\M_d = \M(d\lambda^0,\psi)$. We then have that
$$
\lim_{n \to \infty} \inf_{\beta \in \R^p} 
P_\beta(\beta \in \betaAL - \sqrt{\frac{\lambda^*}{n}}\M_d) = 0
$$
for any $0 < d < 1$.
\end{theorem}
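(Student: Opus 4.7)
The strategy is to exhibit a moving-parameter sequence $\beta_n$ along which the coverage probability tends to zero. Since $\{\beta_n \in \betaAL - \sqrt{\lambda^*/n}\,\M_d\}=\{\sqrt{n/\lambda^*}(\betaAL-\beta_n)\in\M_d\}$ and $\M_d$ is closed, the Portmanteau theorem reduces the task to finding $\beta_n$ such that $\sqrt{n/\lambda^*}(\betaAL-\beta_n)$ converges in distribution to a deterministic point $m$ lying strictly outside $\M_d$ (which will automatically lie in $\M$).

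To build such a $\beta_n$, I would go through Theorem~\ref{thm:asymp_dist}. First pick an index $j_0$ with $\lambda^0_{j_0}=1$; this is possible because $\max_j \lambda_j/\lambda^*\equiv 1$ (after a subsequence argument if necessary), and then $\lambda_{j_0}\sim\lambda^*\to\infty$ forces $\psi_{j_0}=\lim \sqrt{\lambda^*}/\lambda_{j_0}=0$. Since $d<1$ and $(C^{-1})_{j_0,j_0}>0$, the interval $\bigl(\sqrt{(C^{-1})_{j_0,j_0}},\sqrt{(C^{-1})_{j_0,j_0}/d}\,\bigr)$ is non-empty; fix any $c$ inside, set $\phi^*_{j_0}=c$ and $\phi^*_j=+\infty$ for $j\neq j_0$, and define $\beta_{n,j_0}=c\,\lambda_{j_0}/\sqrt{n\lambda^*}$ and $\beta_{n,j}=1$ for $j\neq j_0$. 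A direct computation gives $\sqrt{n}\beta_{n,j}\sqrt{\lambda^*}/\lambda_j\to\phi^*_j$ in $\Rquer$ for every $j$, using $\sqrt{n\lambda^*}/\lambda_j\ge\sqrt{n/\lambda^*}\to\infty$ on the divergent coordinates. Applying Theorem~\ref{thm:asymp_dist}, $\sqrt{n/\lambda^*}(\betaAL-\beta_n)\dto\argmin V_{\phi^*}$. Because $|\phi^*_j|=\infty$ annihilates the penalty for every $j\neq j_0$, and $\psi_{j_0}=0$ removes the $Z$-dependence in the $j_0$-th penalty, the limiting objective is
$$V_{\phi^*}(u)=u'Cu+\frac{2}{c}\bigl(|u_{j_0}+c|-c\bigr),$$
a strictly convex, deterministic function whose unique minimizer is $m=-C^{-1}e_{j_0}/c$ (with $e_{j_0}$ the $j_0$-th unit vector of $\R^p$); the choice $c^2>(C^{-1})_{j_0,j_0}$ gives $m_{j_0}+c>0$, so the stationarity equation reduces to $Cm=-e_{j_0}/c$. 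One checks $(Cm)_j=0$ for $j\neq j_0$ and $m_{j_0}(Cm)_{j_0}=(C^{-1})_{j_0,j_0}/c^2\in(d,1)$, hence all defining constraints for $\M$ are satisfied while the $j_0$-th constraint defining $\M_d$ is violated, so $m\in\M\setminus\M_d$.

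Portmanteau applied to the closed set $\M_d$ and the Dirac limit at $m\notin\M_d$ now yields $P_{\beta_n}(\sqrt{n/\lambda^*}(\betaAL-\beta_n)\in\M_d)\to 0$, and since this probability bounds $\inf_\beta P_\beta(\cdot)$ from above, the statement follows. The main obstacle is the potential randomness of $\argmin V_\phi$ for a generic finite $\phi$: when some $\psi_j\in(0,\infty)$ the minimizer genuinely depends on $Z$, and controlling its location relative to $\M_d$ with full probability is not obvious. The resolving device is to send $|\phi^*_j|\to\infty$ on all coordinates but $j_0$, which removes the randomness-carrying penalty terms and collapses the limit problem to a tractable deterministic optimization whose minimizer lands on the desired piece of the constraint surface of $\M$.
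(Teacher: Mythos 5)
Your proposal addresses only the second assertion of the theorem. The first assertion --- that every open superset $\O$ of $\M$ has uniform asymptotic coverage $1$ --- is nowhere treated, and it is not a routine complement: it is precisely the part where one must control \emph{all} parameter sequences simultaneously rather than one sequence of your choosing. The paper proves it by picking, for each $n$, a parameter $\beta_n$ that nearly attains $\inf_{\beta \in \R^p} P_\beta(\beta \in \betaAL - \sqrt{\lambda^*/n}\,\O)$, passing to subsequences along which $\sqrt{n}\beta_{n,j}\sqrt{\lambda^*}/\lambda_j$ converges in $\Rquer$, and then combining Theorem~\ref{thm:asymp_dist} with the \emph{open-set} half of the Portmanteau theorem and with Proposition~\ref{prop:min_set}, whose whole point is that for every admissible $\phi$ and every $\omega$ the limiting minimizer lies in $\M \subseteq \O$; this yields that the coverage along the near-infimizing sequence has $\liminf$ at least $P_\phi(\argmin_u V_\phi(u) \in \M) = 1$. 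Your opening reduction (``exhibit a moving-parameter sequence $\beta_n$'') is only valid for upper-bounding the infimum, i.e.\ for the coverage-$0$ claim; it cannot produce the coverage-$1$ claim, so as it stands the theorem is only half proved.

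For the second assertion, your argument is correct and takes a genuinely different route from the paper. The paper sets $r = C^{-1}\lambda^0$, rescales it to a point $m$ on the boundary of $\M$ (where $m_j(Cm)_j = \lambda^0_j$ at a maximizing coordinate), and then invokes the construction \eqref{eq:phi} from the proof of Proposition~\ref{prop:min_set} to exhibit a non-random $V_\phi$ having $m$ as its minimizer. You instead isolate one coordinate $j_0$ with $\lambda^0_{j_0} = 1$ (which indeed forces $\psi_{j_0} = 0$), drive $\phi_j \to \infty$ on all other coordinates so that their penalty terms vanish, and minimize the resulting explicit, deterministic, strictly convex function by hand, landing at $m = -C^{-1}e_{j_0}/c$ with $m_{j_0}(Cm)_{j_0} = (C^{-1})_{j_0 j_0}/c^2 \in (d,1)$. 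Your verifications --- the limits defining $\phi^*$ (via $\sqrt{n\lambda^*}/\lambda_j \geq \sqrt{n/\lambda^*} \to \infty$), the condition $c^2 > (C^{-1})_{j_0 j_0}$ validating the stationarity equation, the membership $m \in \M \setminus \M_d$, and the closed-set Portmanteau step --- are all sound. This bypasses the ``$\supseteq$'' machinery of Proposition~\ref{prop:min_set} entirely and makes the escaping point explicit, which is a genuine simplification for this half; supply the missing first half and the proof is complete.
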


\begin{remark*}
The statements in Theorem~\ref{thm:conf_sets} can be strengthened in
the following way. Let $\lambda^*/n \to 0$ and $\lambda^* \to \infty$.

\begin{enumerate}[label=(\alph{enumi}),ref=(\alph{enumi})]

\item \label{item:conf_ref_super} If $\lambda^0 \in (0,1]^p$, then for
any $d > 1$ we have
$$
\lim_{n \to \infty} \inf_{\beta \in \R^p} 
P_\beta(\beta \in \betaAL - \sqrt{\frac{\lambda^*}{n}}\M_d) = 1.
$$

\item \label{item:conf_ref_sub} If $\psi \in \{0,\infty\}^p$, then any
closed and proper subset $\C$ of $\M$ fulfills
$$
\lim_{n \to \infty} \inf_{\beta \in \R^p} 
P_\beta(\beta \in \betaAL - \sqrt{\frac{\lambda^*}{n}}\C) = 0.
$$
\end{enumerate}
Note that for uniform tuning, both refinements hold since $\psi=0$ and
$\lambda^0 = (1,\dots,1)'$.

Part \ref{item:conf_ref_super} holds since under the given
assumptions, $\M_d$ has non-empty interior and therefore contains an
open superset of $\M$. Part \ref{item:conf_ref_sub} hinges on the fact
that the limits in Theorem~\ref{thm:asymp_dist} are always non-random
under the given assumptions.
\end{remark*}

Casually put, Theorem~\ref{thm:conf_sets} and the subsequent remark
show the following. The set $\M = \M_1$ acts as a benchmark for
confidence sets in the sense that if we take a ``slightly larger''
set, multiplied with the appropriate factor and centered at the
adaptive Lasso estimator, we get a confidence region with minimal
asymptotic coverage probability equal to 1. If, however, we base the
region on a ``slightly smaller'' set than $\M$, we end up with a
confidence set of asymptotic minimal coverage 0. Nothing can be
revealed from the above when using $\M$ itself. We get into a deeper
discussion in the following.

We focus on the case where $\lambda^0 \in (0,1]^p$, i.e., the case
where all components of $\lambda^0$ are non-zero (implying $\psi =
0$). This means that all components are penalized at the same rate,
which is obviously fulfilled for uniform tuning. In this case, the
asymptotic distribution is mere point-mass with no stochastic part
surviving in the limit, as can be seen from
Theorem~\ref{thm:asymp_dist}. The reason for this is the fact that
when controlling for the bias of the estimator (by scaling with the
reciprocal of the uniform convergence rate), the stochastic part
vanishes asymptotically. In other words, the appropriate scaling
factor is simply not large enough to keep the random component alive
in the limit, illustrating that the bias is of larger order than the
stochastic component when viewed under a uniform lens\footnote{Note
that Proposition~\ref{prop:min_set} shows that in \emph{all} settings
where at least one component is tuned consistently, even if a
stochastic component survives in the limit, it always has bounded
support contained in $\M$, leaving very limited possibilities for the
construction of confidence regions based on the asymptotic
distribution.} -- a fact that is generally inherent to penalized
estimators.

Given the above considerations, one might ask what happens when the
confidence region is based on $\M_{d_n}$ where $d_n$ may vary? The
following theorem addresses this question by giving upper and lower
bounds for the corresponding coverage probabilities. To state the
theorem, we define the finite sample version of $\M_d$ as
$$
\Mhat_d = \left\{ m \in \R^p: (X'X m)_j = 0 \text{ if }
\lambda_j = 0, m_j \left(\frac{X'X}{n} m \right)_j \leq
\frac{\lambda_j}{\lambda^*} d \text{ if } \lambda_j > 0\right\},
$$
which differs from $\M_d$ only in that $C$ and $\lambda^0$ are
replaced by their finite-sample equivalents $X'X/n$ and
$\lambda/\lambda^*$, respectively, so that $\Mhat_d$ converges to
$\M_d$ (in the Hausdorff metric). We now provide lower and upper
bounds depending on if and how $d_n$ converges to $1$ in relation to
$\lambda^*$.

\begin{theorem} \label{thm:conf_bounds}
Assume that $\lambda^0 \in (0,1]^p$ and let $\nu = \lim_{n \to \infty}
\sqrt{\lambda^*} (d_n-1) \in \Rquer$. We then have
$$
\limsup_{n \to \infty} \inf_{\beta \in \R^p}
\P_\beta \left(\beta \in \betaAL - \sqrt{\frac{\lambda^*}{n}}\Mhat_{d_n} \right)
\leq \min_{1 \leq j \leq p} 
\Phi\left(\frac{\nu\sqrt{\lambda^0_j}}{\sigma\sqrt{3+(C^{-1})_{jj}C_{jj}}} \right),
$$
and for $\nu > 0$
$$
\liminf_{n \to \infty}\inf_{\beta \in \R^p}
\P_\beta \left( \beta \in \betaAL - \sqrt{\frac{\lambda^*}{n}}\Mhat_{d_n} \right) \geq 
\min_{1 \leq j \leq p}
F_{\chi^2_p} \left(\frac{(\lambda^0_j\nu)^2}{4 \kappa_{C} l_0\sigma^2} \right),
$$
where $l_0 = \sum_{j=1}^p \lambda^0_j$, and $\Phi$ and $F_{\chi^2_p}$
denote the cdf of a standard normal and a chi-squared distribution
with $p$ degrees of freedom, respectively. The symbol $\kappa_{C}$
stands for the condition number of $C$ with respect to the spectral
norm, i.e., the ratio of the largest and the smallest eigenvalue.
\end{theorem}

\begin{remark*} \begin{enumerate} \item Theorem~\ref{thm:conf_bounds} can be shown
to still hold true when $\Mhat_{d_n}$ is replaced by its counterpart
$\M_{d_n}$, with a slight adaptation of the constant $\nu$ involving
the convergence rate of $X'X/n$ to $C$ and $\lambda/\lambda^*$ to
$\lambda^0$.

\item If $d_n = 1$ for all $n$, implying that the confidence region is
based on $\Mhat_1$, the above theorem provides $0$ as lower and $1/2$
as upper bound. The lower bound can, in fact, be shown to be
\emph{strict}, implying that using $\Mhat_1$ will always yield a
\emph{positive} asymptotic coverage (bounded by $1/2$) when all
components of $\lambda^0$ are non-zero.

\item \cite{PoetscherSchneider10} prove that in the one-dimensional and
Gaussian case, the upper bound of Theorem~\ref{thm:conf_bounds} is
sharp: The interval $[\betaAL - (\lambda^*/n)^{1/2}d_n, \betaAL +
(\lambda^*/n)^{1/2}d_n]$ possesses asymptotic infimal coverage
probability of $\Phi(\nu/(2\sigma))$, which is precisely the upper
bound in the above theorem. 

\item Lemma~\ref{lem:finIneq}, on which the proof of the second
statement in the above theorem is based, reveals that for any $d > 1$,
the convergence rate of the coverage probability of $\Mhat_d$
(converging to 1) is \emph{at least} $1/\lambda^*$.

\end{enumerate}

\end{remark*}

Theorem~\ref{thm:conf_bounds} furthermore allows to illustrate the
following. Assume that the confidence region $\Mhat_{d_n}$ has
asymptotic coverage strictly between 0 and 1 (implying that $d_n \to
1$). Then this region will asymptotically not differ in volume from
sets that exhibit asymptotic coverage of probability 1. In fact, it
can be shown that there exists a sequence $\tilde d_n$ such that
$\Mhat_{\tilde d_n}$ has asymptotic coverage 1, satisfying
$$
(\lambda^*)^q \left(\frac{\mu_p(\Mhat_{\tilde d_n})
}{\mu_p(\Mhat_{d_n})} - 1 \right) \longrightarrow 0 \;\;\; \text{ for
all } q < \frac{1}{2},
$$
where $\mu_p$ denotes $p$-dimensional Lebesgue measure. This states
that the ratio of volumes will tend to $1$, even faster than rate
$(\lambda^*)^q$ for any $q < 1/2$. It demonstrates a peculiar
nature inherent to the estimation method, differing strongly from the
standard approach through the LS estimator.

\medskip

One might wonder now how this type of confidence region does indeed
compare to the confidence ellipse based on the LS estimator. Note that
the regions will be multiplied by a different factor and centered at a
different estimator. In general, the following observation can be
made. For $0 < \alpha < 1$, let $E_\alpha = \{z \in \R^p : z'Cz \leq
k_\alpha\}$ with $k_\alpha > 0$ be such that $\betaLS -
n^{-1/2}E_\alpha$ is an asymptotic $(1\!-\!\alpha)$-confidence region
for $\beta$. If we contrast this with $\betaAL -
(\frac{\lambda^*}{n})^{1/2}\M$, we see that since both $E_\alpha$ and
$\M$ have positive, finite volume and since $\lambda^* \to \infty$,
the regions based on the adaptive Lasso are always larger by an order
of magnitude. This phenomenon is a special case of what has been found
for any consistently tuned model selection estimator in
\cite{Poetscher09}.

\medskip

Finally, we illustrate the shape of $\M$. We start with $p=2$ and the
matrix
$$
C = \begin{bmatrix} 1 & -0.7 \\ -0.7 & 1 \end{bmatrix}.
$$
We consider the case of uniform tuning, so that $\lambda^0 = (1,1)'$
and $\psi = (0,0)'$ and show the resulting set $\M$ in
Figure~\ref{fig:conf_2d}. The color indicates the value of
$\max_{j=1,2} m_j(Cm)_j$ at the specific point $m$ inside the set. The
higher the absolute value of the correlation of the covariates, the
flatter and more stretched the confidence set becomes. As one may
expect intuitively, in case of negative correlation, the confidence
set covers more of the area where the signs of the covariates are
equal, as can be seen in Figure~\ref{fig:conf_2d}. A positive
correlation causes the opposite behavior. Note that the corners of the
set $\M$ touch the boundary of the ellipse $E_\alpha$ for a certain
value of $k_\alpha$.

\begin{figure}[htp]
\centering
\includegraphics[width=0.9\textwidth,height=0.7\textwidth]{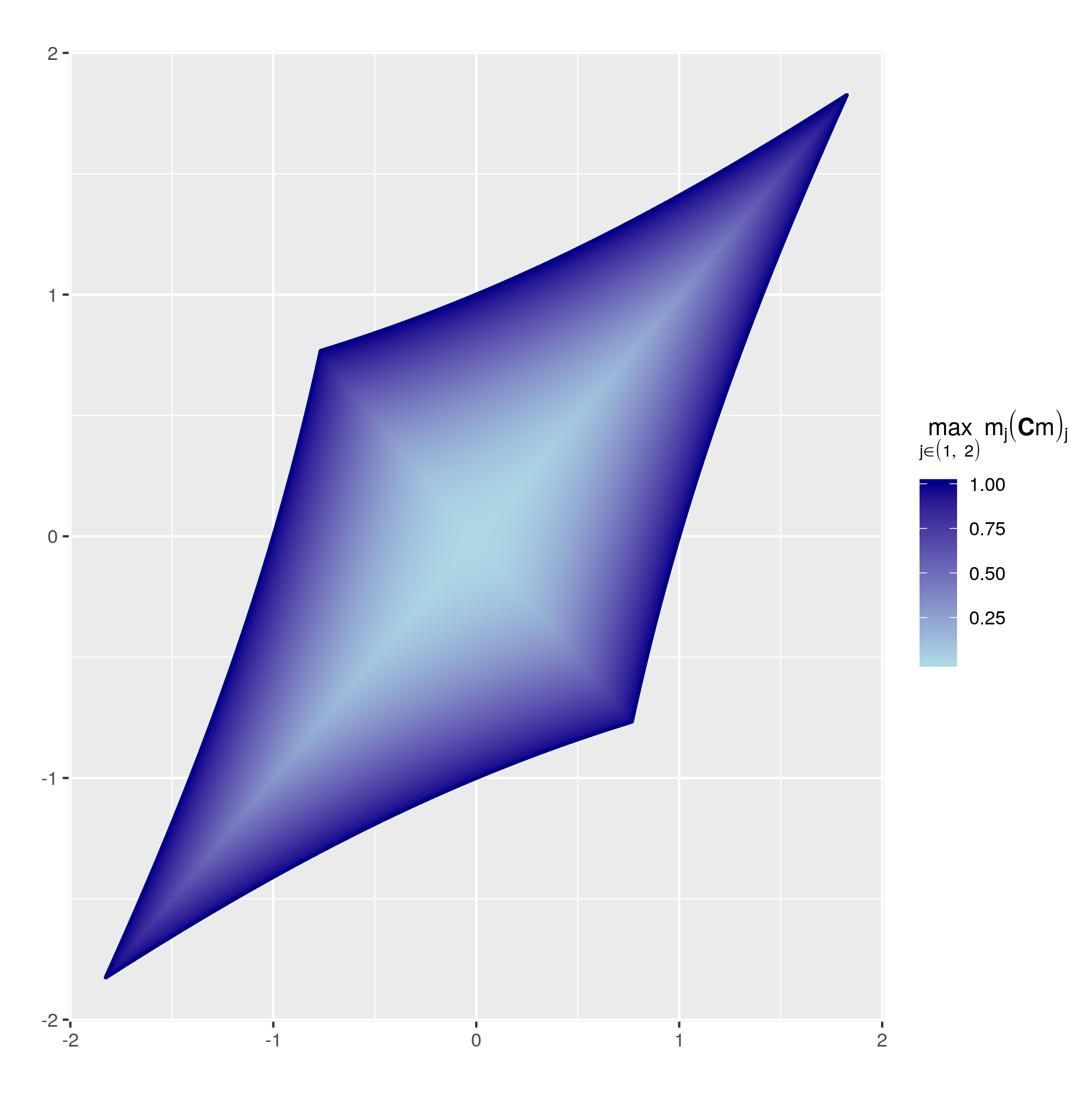}
\caption{\label{fig:conf_2d} An example for the set $\M$ with uniform
tuning in $p=2$ dimensions.}
\end{figure}

For the case of $p=3$, we again start with an example with uniform
tuning so that $\lambda^0 = (1,1,1)'$ and $\psi = (0,0,0)'$ and
consider the matrix
$$
C = \begin{bmatrix} 1 & -0.3 & 0.7 \\ -0.3 & 1 & 0.2 \\ 0.7 & 0.2 & 1 \end{bmatrix}.
$$
The resulting set $\M$ is depicted in Figure~\ref{fig:conf_3d}. To
give a better impression of the shape, the set is colored depending on
the value of the third coordinate. Here, the high correlation between
the first and third covariate stretches the set in the direction where
the signs of the covariates differ. Figure~\ref{subfig:conf_proj}
shows the projections of the three-dimensional set of
Figure~\ref{subfig:conf_3d} onto three planes where one component is
held fixed at a time. The projection onto the plane where the second
component is held constant clearly shows the behavior explained above.
On the other hand, the other two projections emphasize that for
covariates with a lower correlation in absolute value, the confidence
set is less distorted.

\begin{figure}[htp]
\centering
\begin{subfigure}{\textwidth}
\centering
\includegraphics[width=0.9\textwidth,height=0.7\textwidth]{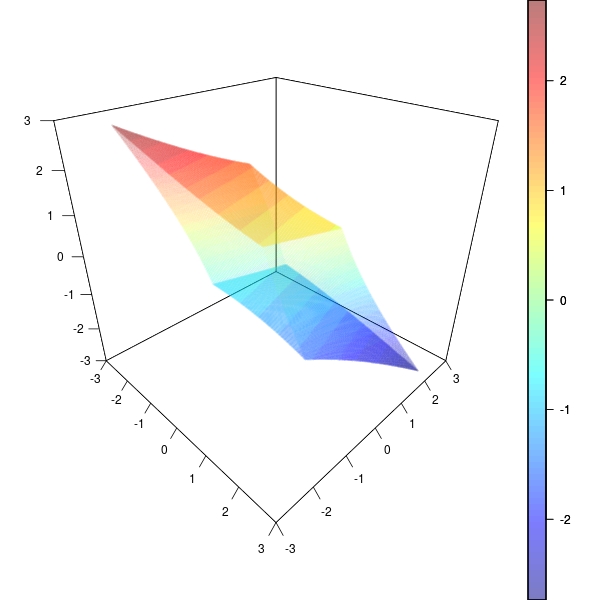}
\caption{\label{subfig:conf_3d}}
\end{subfigure}
\begin{subfigure}{\textwidth}
\centering
\includegraphics[width=0.9\textwidth,height=0.7\textwidth]{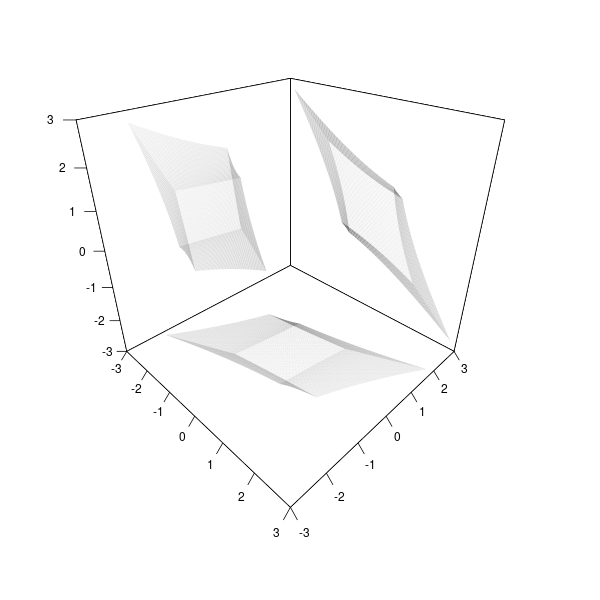}
\caption{\label{subfig:conf_proj}}
\end{subfigure}
\caption{\label{fig:conf_3d} An example for the set $\M$ with uniform
tuning and $p=3$ dimensions. The three-dimensional set is depicted in
(\subref{subfig:conf_3d}) whereas its two-dimensional projections are
shown in (\subref{subfig:conf_proj}).}
\end{figure}

Finally, Figure~\ref{fig:conf_partial} illustrates the partially tuned
case with the same matrix $C$. The first component is not penalized
whereas the remaining ones are tuned uniformly. This implies that
$\lambda^0 = (0,1,1)'$ and $\psi = (\infty,0,0)'$. Due to the
condition $(Cm)_1 = 0$ for all $m \in \M$, the resulting set is an
intersection of a plane with the set in Figure~\ref{subfig:conf_3d}.
The fact that the confidence set is only two-dimensional might appear
odd and is due to the fact that the unpenalized component exhibits a
faster convergence rate so that the factor $(\lambda^*/n)^{1/2}$ with
which $\M$ is multiplied is not large enough for this component to
survive in the limit.

\begin{figure}[htp]
\centering
\includegraphics[width=0.9\textwidth,height=0.7\textwidth]{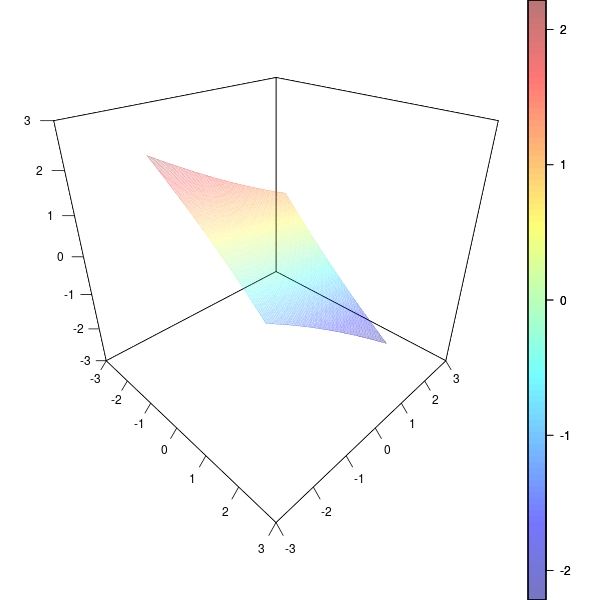}
\caption{\label{fig:conf_partial} An example of the set $\M$ with
partial tuning and $p=3$ dimensions. The first component is not
penalized resulting in the set being part of a two-dimensional
subspace.}
\end{figure}

%%%%%%%%%%%%%%%%%%%%%%%%%%%%%%%%%%%%%%%%%%%%%%%%%%%%%%%%%%%%%%%%%%%%%%%%%%%%
%%%%%%%%%%%%%%%%%%%%%%%%%%%%%%%%%%%%%%%%%%%%%%%%%%%%%%%%%%%%%%%%%%%%%%%%%%%%
\section{Summary and conclusions} \label{sec:summary}
%%%%%%%%%%%%%%%%%%%%%%%%%%%%%%%%%%%%%%%%%%%%%%%%%%%%%%%%%%%%%%%%%%%%%%%%%%%%
%%%%%%%%%%%%%%%%%%%%%%%%%%%%%%%%%%%%%%%%%%%%%%%%%%%%%%%%%%%%%%%%%%%%%%%%%%%%

We give a detailed study of the asymptotic behavior of the adaptive
Lasso estimator with partially consistent and partial tuning in a
low-dimensional linear regression model in terms of consistency and
distributional properties. We do so within a framework that takes into
account the non-uniform behavior of the estimator, non-trivially
generalizing results from \cite{PoetscherSchneider09} that were
derived for the case of orthogonal regressors. We also demonstrate and
formalize what these distributional results imply for valid confidence
regions, namely that there exists a ``benchmark'' set $\M$, such that
open supersets have asymptotic coverage equal to 1, whereas ``slightly
smaller'' sets exhibit 0 uniform coverage in the limit. The reason for
this phenomenon lies in the different rates of the bias component and
the stochastic component of the estimator. A similar effect has been
observed before for the one-dimensional case in
\cite{PoetscherSchneider10}. We illustrate the shape of $\M$ and
demonstrate the effect of componentwise tuning at different rates, as
well as the implications of partial tuning on the confidence set.

\newpage

\appendix
%%%%%%%%%%%%%%%%%%%%%%%%%%%%%%%%%%%%%%%%%%%%%%%%%%%%%%%%%%%%%%%%%%%%%%%%%%%%
%%%%%%%%%%%%%%%%%%%%%%%%%%%%%%%%%%%%%%%%%%%%%%%%%%%%%%%%%%%%%%%%%%%%%%%%%%%%
\section{Appendix -- Proofs} \label{sec:proofs}
%%%%%%%%%%%%%%%%%%%%%%%%%%%%%%%%%%%%%%%%%%%%%%%%%%%%%%%%%%%%%%%%%%%%%%%%%%%%
%%%%%%%%%%%%%%%%%%%%%%%%%%%%%%%%%%%%%%%%%%%%%%%%%%%%%%%%%%%%%%%%%%%%%%%%%%%%

We introduce the following additional notation for the proofs. The
symbol $e_j$ denotes the $j$-th unit vector in $\R^p$ and the sign
function is given by $\sgn(x) = \ind_{\{x > 0\}} - \ind_{\{x < 0\}}$
for $x \in \R$. For a function $g:\R^p \to \R$, the one-sided
directional derivative of $g$ at $u$ in the direction of $r \in \R^p$
is denoted by $\D_r g(u)$, given by
$$
\D_r g(u) = \lim_{h \searrow 0} \frac{g(u + hr) - g(u)}{h}.
$$
For a vector $u \in \R^p$ and an index set $I \subseteq
\{1,\dots,p\}$, $u_I \in \R^{|I|}$ contains only the components of $u$
corresponding to indices in $I$. Finally, $\pto$ denotes convergence
in probability.

%%%%%%%%%%%%%%%%%%%%%%%%%%%%%%%%%%%%%%%%%%%%%%%%%%%%%%%%%%%%%%%%%%%%%%%%%%%%
\subsection{Proofs for Section~\ref{sec:AL_LS}}
%%%%%%%%%%%%%%%%%%%%%%%%%%%%%%%%%%%%%%%%%%%%%%%%%%%%%%%%%%%%%%%%%%%%%%%%%%%%

\begin{proof}[Proof of Lemma~\ref{lem:AL_LS}]
Consider the function $G_n: \R^p \to \R$
$$
u \mapsto L_n(u + \betaLS) - L_n(\betaLS),
$$	
which can, using the normal equations of the LS estimator, be
rewritten to
$$
u'X'Xu + 2 \sum_{j=1}^p \lambda_j \frac{|u_j + \betaLSj| - |\betaLSj|}{|\betaLSj|}.
$$
Note that $G_n$ is minimized at $\betaAL - \betaLS$ and that, since
all directional derivatives have to be non-negative at the minimizer
of a convex function, after some basic calculations we get
\begin{align} \label{eq:dir_dev}
\begin{split}
\D_{e_j} G_n(\betaAL-\betaLS) & \; = \; 2(X'X(\betaAL-\betaLS))_j + 2 \frac{\lambda_j}{|\betaLSj|}
\left(\ind_{\{\betaALj \geq 0\}} - \ind_{\{\betaALj < 0\}}\right) \geq 0 \\
\D_{-e_j} G_n(\betaAL-\betaLS) & \; = \; -2(X'X(\betaAL-\betaLS))_j + 2 \frac{\lambda_j}{|\betaLSj|}
\left(\ind_{\{\betaALj \leq 0\}} - \ind_{\{\betaALj > 0\}}\right) \geq 0 
\end{split}
\end{align}
for all $j = 1,\dots,p$. When $\betaALj = 0$, this implies that
$$
|(X'X(\betaAL-\betaLS))_j| \leq \frac{\lambda_j}{|\betaLSj|}
$$
and therefore
\begin{equation} \label{eq:AL_LS1}
|(\betaAL-\betaLS)_j(X'X(\betaAL-\betaLS))_j| \leq \lambda_j
\end{equation}
holds. When $\betaALj \neq 0$, the equations in \eqref{eq:dir_dev} imply
\begin{equation} \label{eq:AL_LS2}
(X'X(\betaAL-\betaLS))_j = -\lambda_j\frac{\sgn(\betaALj)}{|\betaLSj|}.
\end{equation}
If $|\betaALj-\betaLSj| \leq |\betaLSj|$, clearly, \eqref{eq:AL_LS1}
also holds. If $|\betaALj-\betaLSj| > |\betaLSj|$, we have
$\sgn(\betaALj - \betaLSj) = \sgn(\betaALj) \neq 0$ yielding
$$
(\betaAL-\betaLS)_j(X'X(\betaAL-\betaLS))_j = 
-\lambda_j\frac{|\betaALj-\betaLSj|}{|\betaLSj|} \leq 0.
$$
In any case, $\lambda_j = 0$ implies $(X'X(\betaAL - \betaLS))_j = 0$, which
completes the proof.
\end{proof}

\smallskip

\begin{proof}[Proof of Corollary~\ref{cor:AL_LS}]
By Lemma~\ref{lem:AL_LS}, we have 
$$
0 \leq \sqrt{n}(\betaAL - \betaLS)'\frac{X'X}{n}\sqrt{n}(\betaAL -
\betaLS) \leq \sum_{j=1}^p \lambda_j \leq p \lambda^* \to 0.
$$
Since $X'X/n \to C$ with $C$ being positive definite, the claim
follows.
\end{proof}

%%%%%%%%%%%%%%%%%%%%%%%%%%%%%%%%%%%%%%%%%%%%%%%%%%%%%%%%%%%%%%%%%%%%%%%%%%%%
\subsection{Proofs for Section~\ref{sec:consistency}}
%%%%%%%%%%%%%%%%%%%%%%%%%%%%%%%%%%%%%%%%%%%%%%%%%%%%%%%%%%%%%%%%%%%%%%%%%%%%

\begin{proof}[Proof of Proposition~\ref{prop:rate_point}]
Consider the function $H_{n,\beta}: \R^p \to \R$ defined by $H_{n,\beta}(u) =
a_n^2(L_n(u/a_n + \beta) - L_n(\beta))/n$ which can be written as
$$
H_{n,\beta}(u) = u'\frac{X'X}{n}u - \frac{2a_n}{n}u'X'\eps + 2\sum_{j=1}^p \lambda_j
\frac{a_n^2}{n|\betaLSj|}\left(|\frac{u_j}{a_n} + \beta_j| - |\beta_j| \right).
$$
$H_{n,\beta}$ is minimized at $a_n(\betaAL - \beta)$ and, since
$H_{n,\beta}(0) = 0$, we have $H_{n,\beta}(a_n(\betaAL - \beta)) \leq
0$, which implies that
$$
a_n(\betaAL-\beta)'\frac{X'X}{n}a_n(\betaAL-\beta) \leq
\frac{a_n}{\sqrt{n}} a_n(\betaAL-\beta)'\frac{2}{\sqrt{n}}X'\eps  +
2 \sum_{j \in \A} \frac{1}{|\betaLSj|}\frac{a_n\lambda_j}{n} |a_n(\betaAL-\beta)_j|,
$$ 
where in the latter sum we have dropped the non-positive terms for $j
\notin \A$ and have used the fact that $|\beta_j| - |u_j/a_n +
\beta_j| \leq |u_j/a_n|$ on the terms for $j \in \A$. Now note that
both $a_n/\sqrt{n}$ and $a_n \lambda_j/n$ are bounded by 1 and that
the sequences $X'\eps/\sqrt{n}$ and $1/\betaLSj$ for $j \in \A$ are
tight, so that we can bound the right-hand side of the above
inequality by a term that is stochastically bounded times
$\|a_n(\betaAL - \beta)\|$. Moreover, since $X'X/n$ converges to $C$
and all matrices are positive definite, we can bound the left-hand
side of the above inequality from below by a positive constant times
$\|a_n(\betaAL - \beta)\|^2$, so that we can arrive at
$$
\|a_n(\betaAL - \beta)\|^2 \leq O_p(1)\, \|a_n(\betaAL - \beta)\|
$$
which proves the claim.
\end{proof}

\smallskip

\begin{proof}[Proof of Proposition~\ref{prop:rate_unif}]
Let $L > 0$ denote the infimum of all eigenvalues of $X'X/n$ and $C$
taken over $n$ and note that $b_n^2\lambda^*/n \leq 1$. By
Lemma~\ref{lem:AL_LS} we have
$$
b_n^2 \|\betaAL-\betaLS\|^2 \leq 
\frac{b_n^2}{L} (\betaAL-\betaLS)'\frac{X'X}{n}(\betaAL-\betaLS) \leq p
\frac{b_n^2}{L} \frac{\lambda^*}{n} \leq \frac{p}{L}.
$$
For any $M \geq 2 \sqrt{\frac{p}{L}}$ we therefore have
\begin{align*}
\P_\beta(b_n\|\betaAL - \beta\| > M) & \; \leq \; \P_\beta(b_n\|\betaAL - \betaLS\| > M/2) + 
\P(b_n\|\betaLS - \beta\| > M/2) \\ & \; = \; \P(b_n\|\betaLS - \beta\| > M/2).
\end{align*}
The claim now follows from the uniform $\sqrt{n}$-consistency of the
LS estimator.
\end{proof}

\smallskip

\begin{proof}[Proof of Theorem~\ref{thm:consist_param}]
We have \ref{item:cond_lambda} $\implies$ \ref{item:unif_consist} by
Proposition~\ref{prop:rate_unif} and clearly, \ref{item:unif_consist}
$\implies \ref{item:ptw_consist}$ holds. To show
\ref{item:ptw_consist} $\implies$ \ref{item:cond_lambda}, assume that
$\betaAL$ is consistent for $\beta$ and that $\lambda_j/n_k \to c \in
(0,\infty]$ for some $j$ along a subsequence $n_k$. Let $\beta_j \neq
0$. On the event $\betaALj \neq 0$, which by consistency has
asymptotic probability equal to 1, we have
$$
\left|\left(\frac{X'X}{n_k}(\betaAL-\betaLS)\right)_j\right| = \frac{\lambda_j}{n_k|\betaLSj|}
$$
by Equation~\eqref{eq:AL_LS2}. By consistency and the
convergence of $X'X/n$, the left-hand side converges to zero in
probability, whereas the right-hand side converges to $c/|\beta_j| >
0$ in probability along the subsequence $n_k$, yielding a
contradiction. This shows the equivalence of the first three
statements.

Moreover, \ref{item:ptw_consist} $\implies$ \ref{item:correct_ms} since for $j \in \A$
$$
\P_\beta(\betaALj = 0) \leq \P_\beta(|\betaALj - \beta_j| > |\beta_j|/2) \to 0
$$
by consistency in parameter estimation. 

The final implication we show is \ref{item:correct_ms} $\implies$
\ref{item:cond_lambda}. For this, assume that $\lambda^*/n \not\to 0$
so that there exists a subsequence $n_k$ such that $\lambda_j/n_k \to
c > 0$ as $n_k \to \infty$ for some $j$. We first look at the case of
$c = \infty$. Note that $\betaAL$ is stochastically bounded, since
$L_n(\betaAL) \leq L_n(0) = \|y\|^2$ implies
$$
\betaAL'\frac{X'X}{n}\betaAL \leq \betaAL'\frac{X'X}{n}\betaAL + 
2\sum_{j=1}^p\lambda_j \frac{|\betaALj|}{|\betaLSj|} \leq \betaAL'\frac{2}{n}X'y.
$$
As $X'X/n \to C$ and $X'y/n \to X'X\beta$, the quadratic term on the
left-hand side dominates the linear term on the right-hand side which
is only possible if $\betaAL$ is $O_p(1)$. Now note that by
Equation~\ref{eq:AL_LS2}, $\betaALj \neq 0$ implies
$$
\left|\left(\frac{X'X}{n_k}(\betaAL-\betaLS)\right)_j\right| =
\frac{\lambda_j}{n_k}\frac{1}{|\betaLSj|}.
$$
The fact that $X'X/n_k \to C$ and that $\betaAL$ and $\betaLS$ are
stochastically bounded for fixed $\beta$ shows that the left-hand side
of the above display is bounded in probability also. The right-hand
side, however, diverges to $\infty$ regardless of the value of
$\beta_j$. We therefore have $\P_\beta(\betaALj = 0) \to 1$ for all
$\beta_j \in \R$, which is a contradiction to \ref{item:correct_ms}.
If $c < \infty$, we first observe that $X'X/n(\betaAL - \betaLS)$ is
always contained in a compact set by Lemma~\ref{lem:AL_LS} and the
convergence of $X'X/n$ to $C$. This implies that $\|X'X/n(\betaAL -
\betaLS)\|_\infty \leq L < \infty$ for some $L > 0$ and for all
$\beta$. Again, by Equation~\ref{eq:AL_LS2},
$$
\left|\left(\frac{X'X}{n_k}(\betaAL-\betaLS)\right)_j\right| =
\frac{\lambda_j}{n_k}\frac{1}{|\betaLSj|},
$$
whenever $\betaALj \neq 0$. The left-hand side is bounded by $L$
whereas the right-hand side converges to $c/|\beta_j|$ in probability.
We therefore get $\P_\beta(\betaALj = 0) \to 1$ for all $\beta_j \in
\R$ satisfying $|\beta_j| < c/L$, also yielding a contraction to
\ref{item:correct_ms}.
\end{proof}

\smallskip

\begin{proof}[Proof of Theorem~\ref{thm:consist_model}]
Since the condition $\lambda^*/n \to 0$ guards against false negatives
asymptotically by Theorem~\ref{thm:consist_param}, we only need to
show that the estimator detects all zero coefficients with asymptotic
probability equal to one. Assume that $\beta_j = 0$ and that $\betaALj
\neq 0$. The partial derivative of $L_n$ with respect to $b_j \neq 0$
is given by
$$
\frac{\partial L_n}{\partial b_j} = 2(X'Xb)_j - 2(X'y)_j + 2 \frac{\lambda_j}{|\betaLSj|}\sgn(b_j)
= 2(X'X(b - \beta))_j - 2(X'\eps)_j + 2 \frac{\lambda_j}{|\betaLSj|}\sgn(b_j),
$$
which yields
$$
\left|\left(\frac{X'X}{n}(a_n(\betaAL - \beta))\right)_j 
- \frac{a_n}{\sqrt{n}}\frac{1}{\sqrt{n}}(X'\eps)_j \right| = 
\frac{\lambda_j}{\sqrt{n}|\betaLSj|}\frac{a_n}{\sqrt{n}}.
$$
Since $\betaAL$ is $a_n$-consistent for $\beta$, $X'X/n$ converges,
$a_n/n^{1/2} \leq 1$ and $X'\eps/\sqrt{n}$ is tight, the left-hand
side of the above display is stochastically bounded. The behavior of
the right-hand side is governed by $\lambda_j a_n/\sqrt{n}$ as
$\sqrt{n}\betaLSj$ is also stochastically bounded for $\beta_j = 0$. If
$a_n/\sqrt{n}$ does not converge to zero, then the right-hand side
diverges because $\lambda_j$ does. If $a_n/\sqrt{n} \to 0$, we have
$a_n = n/\lambda^*$ eventually, so that $\lambda_j a_n/\sqrt{n} =
\sqrt{n}\lambda_j/\lambda^*$ which also diverges by assumption.
\end{proof}

%%%%%%%%%%%%%%%%%%%%%%%%%%%%%%%%%%%%%%%%%%%%%%%%%%%%%%%%%%%%%%%%%%%%%%%%%%%%
\subsection{Proofs for Section~\ref{sec:asymp_dist}}
%%%%%%%%%%%%%%%%%%%%%%%%%%%%%%%%%%%%%%%%%%%%%%%%%%%%%%%%%%%%%%%%%%%%%%%%%%%%

\begin{lemma} \label{lem:Anj}
Assume that $\lambda^*/n \to 0$ and $\lambda^* \to \infty$. Moreover,
suppose that $\psi_{n,j} = \sqrt{\lambda^*}/\lambda_j \to \psi_j \in
[0,\infty]$ and $\phi_{n,j} =
\sqrt{n}\beta_{n,j}\sqrt{\lambda^*}/\lambda_j \to \phi_j \in \Rquer$.
Then for any $u_j \in \R$, the term
$$
\Anj(u_j) = \frac{\lambda_j}{\sqrt{n\lambda^*}}\frac{1}{|\betaLSj|}
\left(|u_j + \sqrt{\frac{n}{\lambda^*}}\beta_{n,j}| -
|\sqrt{\frac{n}{\lambda^*}}\beta_{n,j}|\right)
$$
satisfies $\Anj(u_j) \dto \Aj(u_j)$ where
$$
\Aj(u_j) = \begin{cases}
0 & u_j=0 \text{ or } |\phi_j| = \infty \text{ or } \psi_j = \infty \\ 
\infty & u_j \neq 0 \text{ and } \phi_j = \psi_j = 0 \\ 
2 \frac{|u_j + \lambda^0_j\phi_j| - |\lambda^0_j\phi_j|}{|\psi_jZ_j + \phi_j|} & \text{else}
\end{cases}
$$
with $Z \sim N(0,\sigma^2 C^{-1})$. Moreover,
$$
\sum_{j=1}^p \Anj(u_j) \dto \sum_{j=1}^p \Aj(u_j)
$$
for all $u \in \R^p$.
\end{lemma}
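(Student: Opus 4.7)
The plan is to reduce the convergence to a single joint continuous-mapping argument after identifying the only source of randomness. First I would introduce the auxiliary scaling
\[
T_{n,j} := \frac{\sqrt{n\lambda^*}}{\lambda_j}\,\betaLSj = \phi_{n,j} + \psi_{n,j}\,\sqrt{n}\bigl(\betaLSj - \beta_{n,j}\bigr),
\]
so that the prefactor of $\Anj$ is just $1/|T_{n,j}|$, and rewrite the second factor using $\sqrt{n/\lambda^*}\beta_{n,j} = (\lambda_j/\lambda^*)\,\phi_{n,j}$, which converges to $\lambda^0_j \phi_j$ whenever $\phi_j$ is finite. The standard CLT for the least-squares estimator, available because $X'X/n \to C$ with $C$ positive definite and the errors are i.i.d.\ with finite variance, provides $\sqrt{n}(\betaLS - \beta_n) \dto Z \sim N(0, \sigma^2 C^{-1})$, and this is the only stochastic input.

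Then I would step through the case distinction. The case $u_j = 0$ gives $\Anj(u_j) \equiv 0$. If $|\phi_j| = \infty$ or $\psi_j = \infty$, then $|T_{n,j}| \pto \infty$ (in the second subcase using that the limiting $Z_j$ is continuous and hence a.s.\ nonzero), while the second factor is bounded by $|u_j|$ via the reverse triangle inequality, so $\Anj(u_j) \pto 0$. If $u_j \neq 0$ and $\phi_j = \psi_j = 0$, then $T_{n,j} \pto 0$ while the second factor converges to $|u_j| > 0$, yielding $\Anj(u_j) \pto +\infty$. In the remaining ``else'' case, $\phi_j + \psi_j Z_j$ is finite and a.s.\ nonzero (either because $\psi_j = 0$ with $\phi_j \neq 0$, or because $\psi_j > 0$ forces $\P(\psi_j Z_j = -\phi_j) = 0$ by continuity of $Z_j$), so Slutsky together with the continuous mapping theorem applied to $(x,y) \mapsto (|u_j + y| - |y|)/|x|$ delivers the stated limit.

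For the sum statement I would observe that every $\Anj(u_j)$ is a continuous function (in the extended-real sense) of the shared random vector $\sqrt{n}(\betaLS - \beta_n)$ together with the deterministic sequences $\lambda_j$, $\lambda^*$, $\beta_n$. Joint convergence of $(\Anj(u_j))_{j=1}^{p}$ to $(\Aj(u_j))_{j=1}^{p}$ then follows from one application of the continuous mapping theorem, and summation preserves convergence in distribution in $\Rquer$.

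The main technical obstacle I anticipate is managing the extended real line cleanly: making precise that a real-valued $\Anj(u_j)$ converges in distribution to the constant $+\infty$ (which is really convergence in probability to $+\infty$), and combining components whose limits lie in different branches of the case distinction in the sum. All of the substantive convergence is carried by the single LS-CLT; the remainder is algebraic rewriting and routine continuity arguments.
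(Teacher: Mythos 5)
Your strategy is the same as the paper's own proof: rewrite the prefactor so that
$$
\Anj(u_j)=\frac{|u_j+\zeta_{n,j}|-|\zeta_{n,j}|}{|\phi_{n,j}+\psi_{n,j}Z_{n,j}|},
\qquad \zeta_{n,j}=\sqrt{n/\lambda^*}\,\beta_{n,j},\quad Z_n=\sqrt{n}(\betaLS-\beta_n)\dto Z,
$$
so that the LS central limit theorem is the only stochastic input, and then run a case analysis with Slutsky and the continuous mapping theorem; your handling of the cases $u_j=0$, $\phi_j=\psi_j=0$, the ``else'' case, and the joint convergence of the sum all match the paper's argument and are sound.

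There is, however, one genuine gap, and it sits at the only delicate point of the proof. You claim $|T_{n,j}|\pto\infty$ whenever $|\phi_j|=\infty$ \emph{or} $\psi_j=\infty$, justifying this by ``the limiting $Z_j$ is continuous and hence a.s.\ nonzero.'' That justification covers the subcase $\psi_j=\infty$, $|\phi_j|<\infty$ (and the subcase $|\phi_j|=\infty$, $\psi_j<\infty$ is immediate), but it does not cover the subcase where \emph{both} $|\phi_j|=\infty$ and $\psi_j=\infty$, which is perfectly admissible in the moving-parameter framework. There $T_{n,j}=\phi_{n,j}+\psi_{n,j}Z_{n,j}$ is the sum of two divergent terms that may nearly cancel: the event $\{|T_{n,j}|\le M\}$ is
$$
\P\left(\frac{-M-\phi_{n,j}}{\psi_{n,j}}\le Z_{n,j}\le\frac{M-\phi_{n,j}}{\psi_{n,j}}\right),
$$
i.e.\ the probability that $Z_{n,j}$ falls in an interval whose length $2M/\psi_{n,j}$ vanishes but whose center $-\phi_{n,j}/\psi_{n,j}=-\sqrt{n}\beta_{n,j}$ is an arbitrary, possibly non-convergent sequence. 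Slutsky, the continuous mapping theorem, and Portmanteau applied to a \emph{fixed} closed set cannot control the probability of such a drifting set; the fact that $Z_j$ has no atoms is not by itself enough. This is precisely where the paper invokes Polya's theorem: the distribution functions of $Z_{n,j}$ converge \emph{uniformly} to the continuous Gaussian distribution function, so the probability of any interval of vanishing length tends to zero regardless of its location. (Alternatively, a subsequence argument in $\Rquer$ on the centers $\sqrt{n}\beta_{n,j}$, treating the convergent-to-finite and divergent cases separately, would also close the gap.) Without such a uniformity argument, your case ``$|\phi_j|=\infty$ or $\psi_j=\infty$'' is incomplete.
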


\begin{proof}[Proof of Lemma~\ref{lem:Anj}]
Note that if $u_j = 0$, the term $\Anj$ is clearly equal to 0, so that
we assume $u_j \neq 0$ in the following. Define $\zeta_{n,j} =
\sqrt{n/\lambda^*}\beta_{n,j} \to \zeta_j \in \Rquer$ and notice that
$|\zeta_j| \leq |\phi_j|$, as well as $\zeta_j = \lambda^0_j \phi_j$
when $\lambda^0_j > 0$ or $|\phi_j| < \infty$. Moreover, let $Z_n =
\sqrt{n}(\betaLS - \beta_n)$ which satisfies $Z_n \dto Z$ with $Z \sim
N(0,\sigma^2 C^{-1})$.

We now look at the case where $|\phi_j| = \infty$. The term
$|\Anj(u_j)|$ is bounded by
$$
\frac{\lambda_j}{\lambda^*}\frac{|u_j|}{|Z_{n,j}/\sqrt{\lambda^*} + \zeta_{n,j}|},
$$
where $Z_{n,j}/\sqrt{\lambda^*}$ is $o_p(1)$. If $|\zeta_j| = \infty$
also, the above expression tends to zero in probability. If $0 <
|\zeta_j| < \infty$, the same expression converges to
$\lambda^0_j|u_j|/|\zeta_j|$ in probability. But in this case, we
necessarily have $\lambda^0_j = 0$, so that the limit also equals
zero. If $\zeta_j = 0$, rewrite the above bound to 
$$
\frac{|u_j|}{|\psi_{n,j}Z_{n,j} + \phi_{n,j}|}
$$
which clearly converges to zero in probability when $\psi_j < \infty$.
If $\psi_j = \infty$, note that the above display converges to zero in
probability if and only if for any $\delta > 0$, the expression
\begin{align*}
\P\left(\frac{1}{|\psi_{n,j}Z_{n,j} + \phi_{n,j}|} \geq \delta\right) =
\P\left(|\psi_{n,j}Z_{n,j} + \phi_{n,j}| \leq \frac{1}{\delta}\right) =
\P\left(\frac{-1/\delta - \phi_{n,j}}{\psi_{n,}} \leq Z_n \leq 
\frac{1/\delta - \phi_{n,j}}{\psi_{n,}}\right)
\end{align*}
converges to zero, which it does by Polya's Theorem.

We next turn to the case where $\psi_j = \infty$. If $|\phi_j| =
\infty$ also, the limit equals zero by the above. If $|\phi_j| <
\infty$, since $|\Anj(u_j)|$ is bounded by
$$
\frac{|u_j|}{|\psi_{n,j}Z_{n,j} + \phi_{n,j}|},
$$
it will converge to zero in probability.

Let us now consider the case where $\phi_j = \psi_j = 0$. We write
$\Anj(u_j)$ as
$$
\frac{|u_j + \zeta_{n,j}| - |\zeta_{n,j}|}{|\psi_{n,j}Z_{n,j} + \phi_{n,j}|},
$$
which clearly diverges as $u_j \neq 0$, $|\zeta_{n,j}| \leq
|\phi_{n,j}| \to 0$ and  the denominator tends to 0 in probability.

For the remaining cases where $u_j \neq 0$, $|\phi_j|,\psi_j < \infty$
and $\max(|\phi_j|,\psi_j) > 0$ note that $\Anj(u_j)$ can also be
written as
$$
\frac{|u_j + \zeta_{n,j}| - |\zeta_{n,j}|}{|\psi_{n,j}Z_{n,j} +
\phi_{n,j}|}
$$
and $\zeta_{n,j} \to \zeta_j = \lambda^0_j \phi_j$.

The joint distributional convergence of $\sum_j \Anj(u_j)$ to
$\sum_j \Aj(u_j)$ follows trivially.
\end{proof}

\smallskip

\begin{proof}[Proof of Theorem~\ref{thm:asymp_dist}]
Define $V_{n,\beta_n}(u) =
\frac{1}{\lambda^*}\left(L_n(\sqrt{\lambda^*/n}u + \beta_n) -
L_n(\beta_n)\right)$ and notice that $V_{n,\beta_n}$ is minimized at
$\sqrt{n/\lambda^*}(\betaAL - \beta_n)$. The function $V_{n,\beta_n}$
can be shown to equal
$$
V_{n,\beta_n}(u) = u'\frac{X'X}{n}u - \frac{2}{\sqrt{n\lambda^*}}u'X'\eps +
2 \sum_{j=1}^p \Anj(u_j),
$$
where $\Anj(u_j)$ is defined in Lemma~\ref{lem:Anj}. Since $X'X/n \to
C$, $X'\eps/\sqrt{n}$ is stochastically bounded and $\lambda^* \to
\infty$, invoking Lemma~\ref{lem:Anj} shows that $V_{n,\beta_n}(u)$
converges in distribution to $V_\phi(u)$. We now wish to deduce the
same for the corresponding minimizers $m_n$ and $m$. As explained in
Section~\ref{sec:asymp_dist}, the limiting function $V_\phi$ is not
finite on an open subset of $\R^p$ and we cannot invoke the usual
theorems employed in such a context. Instead, we define a new sequence
of functions whose minimizers behave similarly but whose limiting
function remains finite. To this end, we let $I = \{j :
\max(|\phi_j|,\psi_j) > 0\}$ and assume without loss of generality
that $I = \{1,\dots,\tilde p\}$ with $\tilde p \leq p$ to ease
notation with indices. Now consider $\bar V_{n,\beta_n}: \R^p \to \R$
defined by
$$
\bar V_{n,\beta_n}(u) = u'\frac{X'X}{n}u - \frac{2}{\sqrt{n\lambda^*}}u'X'\eps 
+ 2 \sum_{j \in I} \Anj(u_j)
$$
and let $\tilde V_{n,\beta_n}, \tilde V_\phi: \R^{\tilde p} \to \R$
with
$$ 
\tilde V_{n,\beta_n}(\tilde u) = \bar V_{n,\beta_n}\left(\begin{smallmatrix} \tilde u \\ m_{n,I^c}
\end{smallmatrix}\right) \;\; \text{ and } \;\;
\tilde V_\phi(\tilde u) = V_\phi\left(\begin{smallmatrix} \tilde u \\ 0 \end{smallmatrix}\right).
$$
We first show that $m_{n,I^c} \pto 0$. Note that $V_{n,\beta_n}(m_n)
\leq V_{n,\beta_n}(0) = 0$ implies that
$$
m_n'\frac{X'X}{n}m_n - \frac{2}{\sqrt{n\lambda^*}}m_n'X'\eps + 
2\sum_{j \in I} \Anj(m_{n,j}) \leq -2\sum_{j \notin I} \Anj(m_{n,j}).
$$
The sequence $m_n$ is stochastically bounded by
Proposition~\ref{prop:rate_unif}. But then so is the left-hand side of
the above inequality by Lemma~\ref{lem:Anj}. The right-hand side,
however, tends to $-\infty$ whenever $m_{n,I^c}$ does not tend to zero
in probability, yielding a contradiction.

Since $m_{n,I^c} \pto 0$, it is straightforward to see that $\tilde
V_{n,\beta_n}(\tilde u) \dto \tilde V_\phi(\tilde u)$ for each $\tilde
u \in \R^{\tilde p}$ by Lemma~\ref{lem:Anj}. Inspired by the Convexity
Lemma of \cite{Pollard91}, it can be shown that the functions also
converge uniformly on compact sets of $\R^{\tilde p}$. Since $\tilde
V_{n,\beta_n}$ and $\tilde V_\phi$ are convex and finite, this means
that $\tilde V_{n,\beta_n}$ epiconverges to $\tilde V_\phi$
\citep[c.f.][p.\ 2]{Geyer96TR}. Through Theorem~3.2 in that same
reference, we may deduce that
$$
\argmin_{\tilde u \in \R^{\tilde p}} \tilde V_{n,\beta_n}(\tilde u) \dto 
\argmin_{\tilde u \in \R^{\tilde p}} \tilde V_\phi(\tilde u).
$$
To piece together the missing parts for the minimizers $m_n$ and $m$
of $V_{n,\beta_n}(u)$ and $V_\phi(u)$, respectively, we do the
following. First note that $m_{I^c} = 0$ since otherwise $V_\phi$ is
infinite, so that we have
$$
m_{n,I^c} \pto m_{I^c}.
$$
To finish, observe that
$$
m_{n,I} = \argmin_{\tilde u \in \R^{\tilde p}} \tilde V_{n,\beta_n}(\tilde u)  \dto 
\argmin_{\tilde u \in \R^{\tilde p}} \tilde V_\phi(\tilde u) = m_I.
$$
\end{proof}

\smallskip

\begin{proposition} \label{prop:Vphi_KKT}
The point $m \in \R^p$ is a minimizer of $V_\phi$ if and only if
$$
\begin{cases}
m_j = 0 & \phi_j = \psi_j = 0 \\
(Cm)_j = 0 & |\phi_j| = \infty \text{ or } \psi_j = \infty \\
(Cm)_j = -\frac{\sgn(m_j + \lambda^0_j\phi_j)}{|\psi_j Z_j + \phi_j|}  & 
0 < \max(|\phi_j|,\psi_j) < \infty \text { and } m_j \neq -\lambda^0_j\phi_j \\
|(Cm)_j| \leq \frac{1}{|\psi_j Z_j + \phi_j|} & 
0 < \max(|\phi_j|,\psi_j) < \infty \text{ and } m_j =
-\lambda^0_j\phi_j. \\
\end{cases}
$$
\end{proposition}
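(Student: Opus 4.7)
The plan is to recognize Proposition~\ref{prop:Vphi_KKT} as a first-order optimality statement for a convex function and to derive the KKT system by a routine subdifferential calculation performed one coordinate at a time. Observe that $V_\phi$ has the separable form $V_\phi(u) = u'Cu + \sum_{j=1}^p g_j(u_j)$, where $g_j$ depends only on $u_j$ and is convex in each of the three cases appearing in the definition: $g_j \equiv 0$ when $|\phi_j|=\infty$ or $\psi_j=\infty$; $g_j$ is the convex indicator of $\{0\}$ (finite only at $u_j=0$) when $\phi_j=\psi_j=0$; and otherwise $g_j(u_j) = 2(|u_j + \lambda^0_j\phi_j| - |\lambda^0_j\phi_j|)/|\phi_j + \psi_jZ_j|$, a rescaled shifted absolute value shifted down by a constant (note that this last expression evaluates to $0$ at $u_j=0$, so it extends the first line consistently). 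Since $u'Cu$ is strictly convex and each $g_j$ is proper convex l.s.c., $V_\phi$ is proper convex l.s.c., and $m$ minimizes $V_\phi$ if and only if $0 \in \partial V_\phi(m)$. Separability decouples this into the coordinatewise inclusions $0 \in 2(Cm)_j + \partial g_j(m_j)$ for each $j$.

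The second step is simply to spell out $\partial g_j(m_j)$ in each case and read off the stated conditions. When $g_j \equiv 0$, $\partial g_j(m_j) = \{0\}$ and the inclusion collapses to $(Cm)_j = 0$, matching the second line. When $g_j$ is the indicator of $\{0\}$, finiteness of $V_\phi(m)$ forces $m_j=0$ and the normal cone is all of $\R$, so no restriction is placed on $(Cm)_j$; this matches the first line. In the remaining case, the subdifferential of $|\cdot + \lambda^0_j\phi_j|$ at $m_j$ is $\{\sgn(m_j + \lambda^0_j\phi_j)\}$ off the kink $m_j = -\lambda^0_j\phi_j$ and equals $[-1,1]$ at the kink. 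Multiplying by the positive constant $2/|\phi_j + \psi_jZ_j|$ and substituting into $0 \in 2(Cm)_j + \partial g_j(m_j)$ yields precisely the equality on the third line and the inequality on the fourth.

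The one technical point that requires care is the indicator-function case $\phi_j = \psi_j = 0$, since there $V_\phi$ takes the value $+\infty$ on an open set and one must be mindful about separable subdifferential calculus for extended-real-valued convex functions. I would sidestep this by first noting that any $m$ with $V_\phi(m) < \infty$ must satisfy $m_j = 0$ whenever $\phi_j=\psi_j=0$, and restricting the minimization to the affine subspace $L = \{u \in \R^p : u_j = 0 \text{ for all such } j\}$. On $L$ every remaining $g_j$ is finite-valued, so the classical separable subdifferential formula applies without complication and produces the per-coordinate conditions for the remaining indices. Since for proper convex functions the subdifferential inclusion $0 \in \partial V_\phi(m)$ is both necessary and sufficient for $m$ to be a global minimizer, both directions of the equivalence follow at once.
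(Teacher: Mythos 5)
Your proof is correct and takes essentially the same route as the paper's own argument: the paper likewise disposes of the case $\phi_j = \psi_j = 0$ by noting that $V_\phi$ is infinite otherwise, and then characterizes minimizers of the convex function $V_\phi$ by the condition that $0$ is a subgradient at $m$. Your coordinatewise subdifferential computation and the restriction to the affine subspace where the indicator coordinates vanish simply make explicit the details that the paper's two-sentence proof leaves to the reader.
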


\begin{proof}[Proof of Proposition~\ref{prop:Vphi_KKT}]
Clearly, $m_j = 0$ if $\phi_j = \psi_j = 0$ as otherwise $V_\phi$ is
infinite. The other conditions immediately follow by noting that $m$
is a minimizer of the convex function $V_\phi$ if and only if $0$ is a
subgradient of $V_\phi$ at $m$.
\end{proof}

\smallskip

\begin{proof}[Proof of Proposition~\ref{prop:min_set}] 
``$\subseteq$'': We first show that the union of minimizers is
contained in the set $\M$. For this, let $m = \argmin_u V_\phi(u)$ for
some $\phi \in \Rquer^p$. We distinguish three cases.

\smallskip

Firstly, if $\phi_j = \psi_j = 0$, we have $m_j = 0$ which immediately
implies $m_j (Cm)_j = 0 \leq \lambda^0_j$.

\smallskip 

If secondly $|\phi_j| = \infty$ or $\psi_j = \infty$,
Proposition~\ref{prop:Vphi_KKT} implies that $(Cm)_j = 0$ which also
yields $m_j (Cm)_j = 0 \leq \lambda^0_j$.

\smallskip

Thirdly, if $0 < \max(|\phi_j|,\psi_j) < \infty$, we consider two
subcases. When $\psi_j > 0$, $\lambda^0_j = 0$ necessarily holds.
Here, if $m_j = 0$, we immediately have $m_j (Cm)_j = 0 =
\lambda^0_j$. Otherwise, $m_j \neq 0$ implies
$$
m_j(Cm)_j = -\frac{|m_j|}{|\psi_j Z_j + \phi_j|} < 0 = \lambda^0_j
$$
by Proposition~\ref{prop:Vphi_KKT}. The other subcase of $\psi_j = 0$
can be treated as follows. If $m_j = -\lambda^0_j\phi_j$,
Proposition~\ref{prop:Vphi_KKT} yields
$$
|(Cm)_j| \leq \frac{1}{|\phi_j|}
$$
so that
$$
m_j (Cm)_j \leq |m_j (Cm)_j| \leq \frac{|\lambda^0_j\phi_j|}{|\phi_j|} = \lambda^0_j.
$$
If $m_j \neq -\lambda^0_j\phi_j$, the same proposition gives
$$
(Cm)_j = -\frac{\sgn(m_j + \lambda^0_j\phi_j)}{|\phi_j|}.
$$
If $|m_j| > |\lambda^0_j\phi_j|$, we have $\sgn(m_j) = \sgn(m_j +
\lambda^0_j\phi_j)$ and
$$
m_j(Cm)_j = -\frac{|m_j|}{|\phi_j|} < 0 \leq \lambda^0_j.
$$
Finally, if $|m_j| \leq |\lambda^0_j\phi_j|$, similarly to above we
get
$$
m_j (Cm)_j \leq |m_j (Cm)_j| = \frac{|m_j|}{|\phi_j|} \leq 
\frac{|\lambda^0_j\phi_j|}{|\phi_j|} = \lambda^0_j.
$$

\bigskip

\noindent ``$\supseteq$'': We now need to show that for any $m \in
\M$, we can construct a $\phi \in \Rquer^p$, such that $m  = \argmin_u
V_\phi(u)$. To this end, we define
\begin{equation} \label{eq:phi}
\phi_j = \begin{cases}
\infty & (Cm)_j = 0 \\
-\frac{m_j}{\lambda^0_j} & 
(Cm)_j \neq 0 \text{ and } \lambda^0_j > 0 \text{ and } |m_j(Cm)_j| \leq \lambda^0_j \\
\frac{1}{(Cm)_j} - \psi_j Z_j & \text{else} 
\end{cases}
\end{equation}
and show that $m$ is a minimizer of the resulting function $V_\phi$.
First note that since $m \in \M$, $\psi_j = \infty$ immediately
implies $(Cm)_j = 0$, satisfying the second condition of
Proposition~\ref{prop:Vphi_KKT}. We therefore assume that $\psi_j <
\infty$ in the following and go through the three definitions in
\eqref{eq:phi}.

\smallskip

If $(Cm)_j = 0$ then the second condition in
Proposition~\ref{prop:Vphi_KKT} is satisfied.

\smallskip 

When $\phi_j = - m_j/\lambda^0_j$ the condition $\lambda^0_j > 0$
implies that $\psi_j = 0$. So when $m_j = 0$, we are in the case where
$\phi_j = \psi_j = 0$ and the first condition in
Proposition~\ref{prop:Vphi_KKT} is fulfilled. If $m_j \neq 0$, we have
$$
|(Cm)_j| \leq \frac{\lambda^0_j}{|m_j|} = \frac{1}{|\phi_j|}
$$
and the fourth condition in Proposition~\ref{prop:Vphi_KKT} is
satisfied.

\smallskip 

Finally, when $\phi_j = 1/(Cm)_j - \psi_j Z_j$ and $\lambda^0_j > 0$,
we again have $\psi_j = 0$ and therefore $\phi_j = 1/(Cm)_j$. In that
case, we also have $|m_j(Cm)_j| > \lambda^0_j$ which, since $m \in
\M$, implies that $m_j(Cm)_j < 0$, so that we have $\sgn((Cm)_j) =
-\sgn(m_j)$. But this also entails $|m_j| > \lambda^0_j/|(Cm)_j| =
|\lambda^0_j\phi_j|$ so that $m_j \neq -\lambda^0_j\phi_j$ as well as
$\sgn(m_j) = \sgn(m_j + \lambda^0_j\phi_j)$. Thus,
$$
(Cm)_j = \sgn((Cm)_j)|(Cm)_j| = -\frac{\sgn(m_j)}{|\phi_j|} = 
-\frac{\sgn(m_j + \lambda^0_j\phi_j)}{|\phi_j|}
$$
and the third condition in Proposition~\ref{prop:Vphi_KKT} holds.
Lastly, if $\lambda^0_j = 0$ here and $m_j = 0$, it is easily seen
that the fourth condition of Proposition~\ref{prop:Vphi_KKT} is
satisfied. If $m_j \neq 0$, we are again in the case where $m_j \neq
-\lambda^0_j\phi_j$. Since $m \in \M$, we get $m_j (Cm)_j \leq
\lambda^0_j = 0$ and $m_j \neq 0$ and $(Cm)_j \neq 0$ implies
$\sgn((Cm)_j) = - \sgn(m_j)$. Therefore, similarly as above,
$$
(Cm)_j  = \sgn((Cm)_j)|(Cm)_j| = -\frac{\sgn(m_j)}{|\psi_j Z_j + \phi_j|}
$$
holds, satisfying the third condition in
Proposition~\ref{prop:Vphi_KKT}.
\end{proof}

%%%%%%%%%%%%%%%%%%%%%%%%%%%%%%%%%%%%%%%%%%%%%%%%%%%%%%%%%%%%%%%%%%%%%%%%%%%%
\subsection{Proofs for Section~\ref{sec:conf_sets}}
%%%%%%%%%%%%%%%%%%%%%%%%%%%%%%%%%%%%%%%%%%%%%%%%%%%%%%%%%%%%%%%%%%%%%%%%%%%%

\begin{proof}[Proof of Theorem~\ref{thm:conf_sets}] 
We start by proving the first statement. Let $g_n(\beta) =
P_\beta(\beta \in \betaAL - \sqrt{\frac{\lambda*}{n}}\O)$ and $c_n =
\inf_{\beta \in \R^p} g_n(\beta)$. We have to show that $c_n \to 1$ as
$n \to \infty$.  Since $c_n$ are the infima of $g_n$ we can choose
sequences $(\tilde \beta_{n,k})_{k \in \N} \subseteq \R^p$ such that
$$
|c_n - g_n(\tilde\beta_{n,k})| \leq \frac{1}{k}
$$
for all $n, k \in \N$. Let $\beta_n = \tilde\beta_{n,n}$ and note that
$|c_n - g_n(\beta_n)| = o(1)$ as $n \to \infty$, so that we can look
at the limiting behavior of $g_n(\beta_n)$ instead. For
$\sqrt{n}\beta_n\frac{\sqrt{\lambda*}}{\lambda_j} \to \phi_j \in
\Rquer$, by Theorem~\ref{thm:asymp_dist}, the Portmanteau Theorem and
Proposition~\ref{prop:min_set} we immediately get
\begin{align*}
1 \geq \limsup_n g_n(\beta_n) & \geq \liminf_n g_n(\beta_n) = \liminf_n
P_{\beta_n}(\sqrt{\frac{n}{\lambda^*}}(\betaAL - \beta_n) \in \O) \\ 
& \geq P_\phi(\argmin_u V_\phi(u) \in \O) \geq P_\phi(\argmin_u V_\phi(u) \in \M) = 1,
\end{align*}
proving that $\lim_n c_n = \lim_n g_n(\beta_n) = 1$.

\smallskip

To show the second statement, we define a specific point $m$ on the
boundary of $\M$, as well as $\phi \in \Rquer^p$ such that $m =
\argmin_u V_\phi(u)$ and $\sqrt{n/\lambda^*}(\betaAL - \beta) \pto m$,
implying that the limiting distribution is non-random. Hence,
excluding an open set around that $m$ of $\M$ will give an infimal
coverage probability tending to $0$. Towards this end, let $\S = \{j :
\lambda^0_j > 0\}$ and note that $\S \neq \emptyset$ so that we have
$r = C^{-1}\lambda^0 \neq 0$. Moreover,
$$
0 < r'Cr = \sum_{j \in \S} \lambda^0_j r_j 
$$
implies that there is at least one positive component $r_j$ with $j
\in \S$. Now define $r_0 = \max_{j \in S} r_j > 0$, let $m =
r_0^{-1/2} r$ and note that this $m$ satisfies $m \in
\M\setminus\M_d$, since $Cm = r_0^{-1/2}\lambda^0$ and
$$
m_j(Cm)_j = \lambda^0_j \, \frac{r_j}{r_0},
$$
implying that $(Cm)_j = 0$ for $j \notin \S$, $m_j (Cm)_j \leq
\lambda^0_j$ for $j \in S$ and $m_j (Cm)_j = \lambda^0_j >
d\lambda^0_j$ for some $j \in S$. Also note that $\psi_j = \infty$
implies $j \notin \S$. Now let $\phi \in \Rquer^p$ with
$$
\phi_j  = \begin{cases}
\infty & (Cm)_j = 0 \\
-\frac{m_j}{\lambda^0_j} & (Cm)_j \neq 0 \text{ and } |m_j(Cm)_j| \leq \lambda^0_j \\
\frac{1}{(Cm)_j} & \text{else}.
\end{cases}
$$
According to \eqref{eq:phi} in the proof of
Proposition~\ref{prop:min_set}, $m$ then is the unique minimizer of
the corresponding function $V_\phi$. This can be seen by noting that
$(Cm)_j = 0$ if and only if $\lambda^0_j = 0$, as well as $\psi_j > 0$
implying that $\lambda^0_j = 0$. It is crucial to observe that the
function $V_\phi$ is non-random in this case and that $\M_d$ is
closed. Now take any sequence $(\beta_n)_{n \in \N} \subseteq \R^p$
converging to $\phi$ and let $f_n(\beta) = P_{\beta}(\beta \in \betaAL
- \sqrt{\frac{\lambda^*}{n}}\M_d)$. By Theorem~\ref{thm:asymp_dist}
and the Portmanteau Theorem we have
\begin{align*}
0 & \leq \liminf_n \inf_{\beta \in \R^p} f_n(\beta) 
\leq  \limsup_n \inf_{\beta \in \R^p} f_n(\beta) 
\leq \limsup_n P_{\beta_n}(\sqrt{\frac{n}{\lambda^*}}(\betaAL - \beta_n) \in \M_d) \\
& \leq P_\phi(\argmin_u V_\phi(u) \in \M_d) = \ind_{\{m \in \M_d\}} = 0.
\end{align*}

\end{proof}

The following lemma is the basis to prove
Theorem~\ref{thm:conf_bounds}. For a symmetric matrix $A$, we denote
by $\kappa_A$ the condition number of $A$ with respect to the spectral
norm, i.e., the ratio of the largest by the smallest eigenvalue of $A$
(in absolute value).

\begin{lemma} \label{lem:finIneq}
Let 
$$
c_n = \min_{1 \leq j \leq p} \lambda_j
\left(\dfrac{d_n-1}{2 \sqrt{\kappa_{X'X} l_n \lambda^*}}\right)
$$
with $l_n = \sum_{j=1}^p \lambda_j/\lambda^*$. If $d_n \geq 1$ we have
$$
\inf_{\beta \in \R^p} 
\P_\beta\left(\beta \in \betaAL - \sqrt{\frac{\lambda^*}{n}}\Mhat_{d_n} \right) \geq 
\P\left(\sqrt{\eps'X(X'X)^{-1}X'\eps} \leq 
c_n \left(1 -\frac{c_n}{2 \sqrt{l_n \lambda^*}} \right)\right).
$$
\end{lemma}

\begin{proof}
Let $a = c_n (1 - c_n/(2 \sqrt{l_n \lambda^*}))$. The above statement
is trivial when $a < 0$. Note that by Lemma~\ref{lem:AL_LS}, $\betaAL
- \betaLS$ is an element of $\sqrt{\lambda^*/n}\,\Mhat_1$. If $a = 0$,
the event on the right-hand side implies $X'\eps = 0$ and therefore
$\betaLS - \beta = (X'X)^{-1}X'\eps = 0$. But then we get $\betaAL -
\betaLS = \betaAL - \beta \in \sqrt{\lambda^*/n}\,\Mhat_1$, which
implies the claim since $\Mhat_1 \subseteq \Mhat_{d_n}$.

\smallskip

We now prove the statement for $a > 0$. If we can show that
whenever $z'X'Xz \leq a^2$ and $m \in \sqrt{\lambda^*/n}\,\Mhat_1$, we
get $z + m \in \sqrt{\lambda^*/n}\,\Mhat_{d_n}$, then the following
holds
\begin{align*}
\P&\left(\eps'X(X'X)^{-1}X'\eps \leq a^2\right) = 
\P_\beta\left((\betaLS - \beta)'X'X(\betaLS - \beta) \leq a^2, 
\betaAL - \betaLS \in \sqrt{\lambda^*/n}\,\Mhat_1\right)  \\
& \leq \P_\beta\left(\betaLS - \beta + \betaAL - \betaLS \in \sqrt{\lambda^*/n}\,\Mhat_{d_n}\right) 
= \P_\beta\left(\beta \in \betaAL - \sqrt{\lambda^*/n}\,\Mhat_{d_n} \right)
\end{align*}
for all $\beta \in \R^p$, which is what we have to prove. It only
remains to show that $z + m \in \sqrt{\lambda^*/n}\,\Mhat_{d_n}$
whenever $z'X'Xz \leq a^2$ and $m \in \sqrt{\lambda^*/n}\,\Mhat_1$. To
do so, we show that $(z+m)_j (X'X (z+m))_j \leq \lambda_j d_n$ for all
$j$. As $a > 0$ implies $\lambda_j > 0$ for all $j$, this suffices to
conclude $z + m \in \sqrt{\lambda^*/n}\,\Mhat_{d_n}$. Clearly, $m \in
\sqrt{\lambda^*/n}\,\Mhat_1$ implies $m'X'Xm \leq \sum_{j=1}^p
\lambda_j$. We also have
$$
L_n \|m\|_\infty^2 \leq L_n \|m\|_2^2 \leq m'X'Xm
$$
and
$$
\|X'Xm\|_\infty^2 \leq \|X'Xm\|_2^2 \leq U_n m'X'Xm,
$$
where $L_n$ and $U_n$ are the smallest and largest eigenvalue of
$X'X$, respectively. With the same argument, we get $\|z\|_\infty^2
\leq a^2/L_n$ and $\|X'Xz\|_\infty^2 \leq a^2 U_n$. Equipped with
these inequalities, we conclude for every $j$ that
\begin{align*}
(z + m)_j (X'X(z + m))_j = & \; z_j (X'Xz)_j + m_j (X'Xm)_j + z_j (X'Xm)_j + m_j (X'Xz)_j \\
\leq & \; a^2 \sqrt{U_n/L_n} + \lambda_j + 2 a \sqrt{(U_n/L_n)l_n\lambda^*} \\
\leq & \; \lambda_j +2c_n(1-c_n/(2\sqrt{l_n\lambda^*}))\sqrt{\kappa_{X'X}l_n\lambda^*} 
+ a^2 \sqrt{\kappa_{X'X}} \\ 
\leq & \; \lambda_j + 2 c_n \sqrt{\kappa_{X'X}l_n\lambda^*} - c_n^2 \sqrt{\kappa_{X'X}} + a^2 
\sqrt{\kappa_{X'X}} \\ 
\leq & \; \lambda_jd_n + (a^2 - c_n^2)\sqrt{\kappa_{X'X}} \leq \lambda_j d_n,
\end{align*}
which completes the proof.
\end{proof}

\begin{remark*}
Lemma~\ref{lem:finIneq} bases on a purely algebraic argument and is
still valid if $X$ and $\lambda$ are stochastic (possibly depending on
$\eps$ and each other) and $\eps$ follows an arbitrary distribution.
The only condition needed is the regularity of $X'X$ with probability
$1$.
\end{remark*}

\begin{proof}[Proof of Theorem~\ref{thm:conf_bounds}]
We start by proving the second statement. Note that if $\nu > 0$, we
have $\delta_n > 1$ eventually, allowing to apply
Lemma~\ref{lem:finIneq}. We have that
$$
c_n = \min_{1 \leq j \leq p} \lambda_j
\left(\dfrac{d_n - 1}{2\sqrt{l_n \kappa_{X'X} \lambda^*}} \right) = 
\min_{1 \leq j \leq p} \left( \dfrac{\sqrt{\lambda^*} (d_n-1)}{2 \sqrt{l_n \kappa_{X'X} }} 
\frac{\lambda_j}{\lambda^*}\right)
\underset{n \to \infty}{\longrightarrow} \underset{1 \leq j \leq p}{\min}~
\frac{\nu \lambda^0_j}{2\sqrt{l_0 \kappa_C}}. 
$$
Moreover, $\eps'X(X'X)^{-1}X'\eps/\sigma^2$ converges to a chi-squared
random variable with $p$ degrees of freedom. The second claim then
follows by Lemma~\ref{lem:finIneq} and Polya's Theorem.

The main idea to show the first claim is the following. We pick a
sequence $\beta_n$ close to the boundary of, but outside the set
$\sqrt{\lambda^*/n}\,\Mhat_{d_n}$. As
$\sqrt{\lambda^*/n}\,\Mhat_{d_n}$ converges to
$\sqrt{\lambda^*/n}\,\Mhat_1$, we expect the LS estimator to lie in
the set $\sqrt{\lambda^*/n}\,\Mhat_1$ with a positive probability.
(This is actually the fact if and only if $\nu \in \R$, because then
the gap between $\sqrt{\lambda^*/n}\,\Mhat_1$ and
$\sqrt{\lambda^*/n}\,\Mhat_{d_n}$ is of order $n^{-1/2}$.) However,
$\betaLS \in \sqrt{\lambda^*/n}\,\Mhat_1$ guarantees $\betaAL = 0$. In
that case, $\betaAL - \beta_n = -\beta_n$ is located outside of
$\Mhat_{d_n}$. Hence, $\P_{\beta_n}(\betaLS \notin
\sqrt{\lambda^*/n}\, \Mhat_1)$ gives an upper bound for the infimal
coverage probability.

For an arbitrary but fixed component $1 \leq s \leq p$, we define
$$
\beta_n = \sqrt{ \dfrac{ \lambda_s d_n + \delta_n}{(X'X)_{ss}} } e_s,
$$
where $\delta_n > 0$ and $\lim_{n \to \infty} \delta_n = 0$.
From $\beta_{n,s} (X'X \beta_n)_s = \lambda_s d_n + \delta_n$ it
follows that $\beta_n \notin \sqrt{\frac{\lambda^*}{n}} \widehat{\M}_{d_n}$. 
Hence,
\begin{align*}
\underset{\beta \in \R^p}{\sup} 
\P_\beta \left( \beta \notin \betaAL + \sqrt{\frac{\lambda^*}{n}} \widehat{\M}_{d_n} \right) \geq
\P_{\beta_n} \left( \beta_n \notin \betaAL + \sqrt{\frac{\lambda^*}{n}} \widehat{\M}_{d_n} \right) \geq
\P_{\beta_n} \left( \betaAL = 0 \right).
\end{align*}

If $| \betaLSj (X'X \betaLS)_j | < \lambda_j$ for all $j$, then
$G_n(u) = L_n(u + \betaLS ) - L_n( \betaLS)$ is minimized at $u = -\betaLS$, which gives $\betaAL = 0$. 
So in order to finish the proof, we only have to show that
$$
\underset{n \to \infty}{\lim} \P_{\beta_n} \left( 
| \betaLSj (X'X \betaLS)_j | < \lambda_j
\text{ for all } j \right) = \Phi
\left( \frac{-\nu \sqrt{\lambda^0_s}}{\sigma\sqrt{3+(C^{-1})_{ss}C_{ss}}} \right).
$$
Since $\beta_{n,j} (X'X \beta_n)_j = 0$ for $j \neq s$, we have $\betaLSj (X'X \betaLS)_j \sim \mathcal{O}_p(\sqrt{\lambda^*})$, 
implying $| \betaLSj (X'X \betaLS)_j | < \lambda_j$ with asymptotic probability $1$ as $\lambda^0_j > 0$.
So it only remains to show that
$$
\underset{n \to \infty}{\lim} \P_{\beta_n} \left( 
| \betaLSs (X'X \betaLS)_s | < \lambda_s \right) =
\Phi \left( \frac{-\nu \sqrt{\lambda^0_s}}{\sigma\sqrt{3+(C^{-1})_{ss}C_{ss}}} \right)
$$
holds true. For this, we use the equality
\begin{align*}
\dfrac{1}{\sqrt{ \lambda_sd_n + \delta_n}} &\left(
\betaLSs (X'X \betaLS)_s - \beta_{n,s} (X'X \beta_n)_s \right) = \\
& \dfrac{1}{\sqrt{ \lambda_sd_n + \delta_n}} \left(
((X'X)^{-1}X'\eps)_s (X'\eps)_s + \beta_{n,s} (X'\eps)_s + 
((X'X)^{-1}X'\eps)_s (X'X\beta_n)_s \right) 
%=& \\
%\dfrac{1}{\sqrt{ \lambda_sd_n + \delta_n}} \left(
%((X'X)^{-1}X'\eps)_s (X'\eps)_s \right) +
%\frac{(X'\eps)_s}{\sqrt{(X'X)_{ss}}}  + 
%((X'X)^{-1}X'\eps)_s \sqrt{(X'X)_{ss}} 
\dto Z,
\end{align*}
where $Z \sim \mathcal{N}(0 , \sigma^2 (C_{ss} (C^{-1})_{ss} + 3))$.
This implies
\begin{align*}
\P_{\beta_n} & \left( 
\betaLSs (X'X \betaLS)_s < \lambda_s \right) = \\
& \P_{\beta_n} \left( \dfrac{1}{\sqrt{ \lambda_sd_n + \delta_n}} \left(
\betaLSs (X'X \betaLS)_s - \beta_{n,s} (X'X \beta_n)_s \right) < 
\dfrac{ \lambda_s(1-d_n)+\delta_n }{\sqrt{ \lambda_sd_n + \delta_n}} \right),
\end{align*}
where the right-hand side inside the probability converges to $-\nu \sqrt{\lambda^0_s}$, even in the case where $\lim_{n \to \infty} d_n \neq 1$.
Since $\lim_{n \to \infty} \P_{\beta_n} \left( 
\betaLSs (X'X \betaLS)_s \leq -\lambda_s \right) = 0$ follows by $\betaLSs (X'X \betaLS)_s / \lambda_s \pto \lim_{n \to \infty} d_n \geq 0$, 
the proof is complete.
\end{proof}

%%%%%%%%%%%%%%%%%%%%%%%%%%%%%%%%%%%%%%%%%%%%%%%%%%%%%%%%%%%%%%%%%%%%%%%%%%%%
%%%%%%%%%%%%%%%%%%%%%%%%%%%%%%%%%%%%%%%%%%%%%%%%%%%%%%%%%%%%%%%%%%%%%%%%%%%%
\bibliographystyle{biometrika} \bibliography{journalsFULL,stat,economet}

\begin{thebibliography}{13}
\expandafter\ifx\csname natexlab\endcsname\relax\def\natexlab#1{#1}\fi

\bibitem[{Caner \& Fan(2015)}]{CanerFan15}
\textsc{Caner, M.} \& \textsc{Fan, M.} (2015).
\newblock Hybrid generalized empirical likelihood estimators: Instrument
  selection with adaptive {L}asso.
\newblock \textit{Journal of Econometrics} \textbf{187}, 256--274.

\bibitem[{DeGregorio \& Iacus(2012)}]{DeGregorioIacus12}
\textsc{DeGregorio, A.} \& \textsc{Iacus, S.} (2012).
\newblock Adaptive {L}asso-type estimation for multivariate diffusion
  processes.
\newblock \textit{Econometric Theory} \textbf{28}, 838--860.

\bibitem[{Geyer(1996)}]{Geyer96TR}
\textsc{Geyer, C.} (1996).
\newblock On the asymptotics of convex stochastic optimization.
\newblock Unpublished manuscript.

\bibitem[{Huang et~al.(2008)Huang, Ma \& Zhang}]{HuangEtAl08b}
\textsc{Huang, J.}, \textsc{Ma, S.} \& \textsc{Zhang, C.-H.} (2008).
\newblock Adaptive {L}asso for sparse high-dimensional regression models.
\newblock \textit{Statistical Science} \textbf{18}, 1603--1618.

\bibitem[{Kock(2016)}]{Kock16}
\textsc{Kock, A.~B.} (2016).
\newblock Consistent and conservative model selection with the adaptive {LASSO}
  in stationary and nonstationary autoregressions.
\newblock \textit{Econometric Theory} \textbf{32}, 243--259.

\bibitem[{Kock \& Callot(2015)}]{KockCallot15}
\textsc{Kock, A.~B.} \& \textsc{Callot, L.} (2015).
\newblock Oracle inequalities for high dimensional vector autoregressions.
\newblock \textit{Journal of Econometrics} \textbf{186}, 325--344.

\bibitem[{Medeiros \& Mendes(2017)}]{MedeirosMendes17}
\textsc{Medeiros, M.~C.} \& \textsc{Mendes, E.~F.} (2017).
\newblock Adaptive {LASSO} estimation for {ARDL} models with {GARCH}
  innovations.
\newblock \textit{Econometric Reviews} \textbf{36}, 622--637.

\bibitem[{Pollard(1991)}]{Pollard91}
\textsc{Pollard, D.} (1991).
\newblock Asymptotics for least absolute deviation regression estimators.
\newblock \textit{Econometric Theory} \textbf{7}, 186--199.

\bibitem[{P\"otscher(2009)}]{Poetscher09}
\textsc{P\"otscher, B.~M.} (2009).
\newblock Confidence sets based on sparse estimators are necessarily large.
\newblock \textit{Sankya} \textbf{71-A}, 1--18.

\bibitem[{P\"otscher \& Schneider(2009)}]{PoetscherSchneider09}
\textsc{P\"otscher, B.~M.} \& \textsc{Schneider, U.} (2009).
\newblock On the distribution of the adaptive {LASSO} estimator.
\newblock \textit{Journal of Statistical Planning and Inference} \textbf{139},
  2775--2790.

\bibitem[{P\"otscher \& Schneider(2010)}]{PoetscherSchneider10}
\textsc{P\"otscher, B.~M.} \& \textsc{Schneider, U.} (2010).
\newblock Confidence sets based on penalized maximum likelihood estimators in
  {G}aussian regression.
\newblock \textit{Electronic Journal of Statistics} \textbf{4}, 334--360.

\bibitem[{Tibshirani(1996)}]{Tibshirani96}
\textsc{Tibshirani, R.} (1996).
\newblock Regression shrinkage and selection via the {L}asso.
\newblock \textit{Journal of the Royal Statistical Society Series B}
  \textbf{58}, 267--288.

\bibitem[{Zou(2006)}]{Zou06}
\textsc{Zou, H.} (2006).
\newblock The adaptive {L}asso and its oracle properties.
\newblock \textit{Journal of the American Statistical Association}
  \textbf{101}, 1418--1429.

\end{thebibliography}
%%%%%%%%%%%%%%%%%%%%%%%%%%%%%%%%%%%%%%%%%%%%%%%%%%%%%%%%%%%%%%%%%%%%%%%%%%%%
%%%%%%%%%%%%%%%%%%%%%%%%%%%%%%%%%%%%%%%%%%%%%%%%%%%%%%%%%%%%%%%%%%%%%%%%%%%%

\end{document}